\documentclass[a4paper,11pt]{article}
\usepackage{amsmath}
\usepackage{amsthm}
\usepackage{amsfonts}
\usepackage{amssymb}
\usepackage{amsopn}
\usepackage{dsfont}
\usepackage{mathrsfs}
\usepackage{mathtools}
\usepackage[T1]{fontenc} % codifica dei font in uscita
\usepackage[utf8]{inputenc} % lettere accentate da tastiera
\usepackage[english]{babel} % lingua principale del documento
\usepackage{lipsum} % genera testo fittizio
\usepackage{url} % per scrivere gli indirizzi Internet

\input{epsf.sty}

\setlength{\textheight}{9.15in}     % altezza del testo
\setlength{\textwidth}{6.2in}      % larghezza del testo
\setlength{\evensidemargin}{0.05in} % margine sinistro per pagine pari
\setlength{\oddsidemargin}{0.05in}  % margine sinistro per pagine dispari
\setlength{\headsep}{0pt}          % spazio tra testa e corpo
\setlength{\topmargin}{0in}        % margine superiore

%\pagestyle{fancy} \fancyhf{} \fancyhead[RO,LE]{\small\thepage}
%\fancyhead[RE]{\small\shortauthors} \fancyhead[LO]{\small\shorttitle}

%% Settings for natbib package
%\renewcommand{\cite}{\citet}

\theoremstyle{plain}
\newtheorem{theorem}{Theorem}[section]                                          
\newtheorem{proposition}[theorem]{Proposition}                          

\newtheorem{corollary}[theorem]{Corollary}

\theoremstyle{definition}

\theoremstyle{remark}
\newtheorem{remark}[theorem]{Remark}

\setlength{\parindent}{0pt}

\DeclareMathAlphabet{\mathpzc}{OT1}{pzc}{m}{it}

\newfont{\rsfsten}{rsfs10 scaled 1200}

\linespread{1.}

\begin{document}
\title{\huge \bf A fractional counting process and its connection with the Poisson process}

\author{
{\bf Antonio Di Crescenzo, \ Barbara Martinucci, \ Alessandra Meoli }\\
Dipartimento di Matematica, Universit\`a degli Studi di Salerno\\
Via Giovanni Paolo II, n.\ 132 -  I-84084 Fisciano (SA), Italy\\
Email: \{adicrescenzo, bmartinucci, ameoli\}@unisa.it  
}

\date{\bf To appear in \ {\em ALEA Latin American Journal of Probability and Mathematical Statistics}}

\maketitle

\begin{abstract}
We consider a fractional counting process with jumps of amplitude 
$1,2,\ldots,k$, with $k\in \mathbb{N}$, whose probabilities 
satisfy a suitable system of fractional difference-differential equations.
We obtain the moment generating function and the probability law of the resulting process 
in terms of generalized Mittag-Leffler functions. We also discuss two equivalent representations 
both in terms of a compound fractional Poisson process and of a subordinator governed by 
a suitable fractional Cauchy problem. The first occurrence time of a jump of fixed amplitude  
is proved to have the same distribution as the waiting time of the first event of a classical 
fractional Poisson process, this extending a well-known property of the Poisson process. 
When $k=2$ we also express the distribution of the first passage time of the fractional 
counting process in an integral form. Finally, we show that the ratios given by the powers of 
the fractional Poisson process and of the counting process over their means tend to 1 in probability.

\medskip\noindent 
\textbf{Key words:} Fractional difference-differential equations, 
Mittag-Leffler function, 
Wright function, 
Random time, 
First passage time.
 
\medskip\noindent 
\textbf{AMS 2000 Subject Classification:} 
60G22; % Fractional processes, including fractional Brownian motion
60J80; % Branching processes (Galton-Watson, birth-and-death, etc.)
60G40. % Stopping times; optimal stopping problems; gambling theory
\end{abstract}

%%%%%%%%%%%%%%%%
\section{Introduction and background}\label{Section1}
%%%%%%%%%%%%%%%%%
Fractional Poisson processes and related counting processes are attracting the attention of several 
authors. Most of the recent papers on this topic are centered on certain fractional versions 
(time-fractional, space-fractional, space-time fractional) of the Poisson process, as well as some 
fractional birth processes (see, for instance, the review in \cite{Orsingher2013} and \cite{Alipour2015}). 
Moreover, \cite{BeghinOrsingher2010} study the properties of 
Poisson-type fractional processes, governed by fractional recursive 
differential equations, obtained substituting regular derivatives with fractional derivatives. 
\cite{Mainardi2004} provide a generalization of the pure and compound Poisson 
processes via fractional calculus, by resorting to a renewal process-based approach involving 
waiting time distributions expressed in term of the Mittag-Leffler function. 
A different approach has been developed by \cite{Laskin2003} and \cite{Laskin2009}, 
where a fractional non-Markov Poisson stochastic process based on a fractional generalization 
of the Kolmogorov-Feller equations, and some interesting applications including a fractional compound 
Poisson process have been considered. 
More recently, \cite{Meerschaert2011} show that a Poisson process, 
with the time variable replaced by an independent inverse stable subordinator, is also a fractional 
Poisson process.  Other recent results on fractional Poisson process can be found in 
\cite{GoMa2012} and \cite{GoMa2013}. 
\\
\par
Counting processes with jumps of amplitude larger than 1 are employed in various applications, since 
they are useful to describe simultaneous but independent Poisson streams (see \cite{Adelson1966}, 
for instance). The case of fractional compound Poisson processes has been 
investigated by \cite{Scalas2011}, \cite{BeghinMacci2012} and \cite{BeghinMacci2016}, 
for instance. Moreover, \cite{BeghinMacci2014}    
consider two fractional versions of nonnegative, integer-valued compound Poisson processes, 
and prove that their probability mass function solve certain fractional Kolmogorov forward equations. 
Certain fractional growth processes including the possibility of jumps of amplitude 
larger than 1 have been obtained recently through the interesting space-fractional Poisson process 
(cf.\ \cite{OrsingherPolito2012}) and, more generally, through the class of point processes 
studied in \cite{OrsingherToaldo2015} and \cite{Polito2016}. 
The relevance of fractional compound Poisson processes in applications in ruin theory and their 
long-range dependence are investigated in \cite{BiardSaussereau2014} and \cite{Maheshwari2016}. 
\\
\par
Following the lines of the papers above, here we analyse a suitable extension of the 
fractional Poisson process, say $ M^{\nu}(t) $, which performs $k$ kinds of jumps of amplitude $1, 2, \ldots,k$
with rates $\lambda_1,\lambda_2, \ldots,\lambda_k$ respectively. 
(Throughout the paper we refer to the fractional derivative in the Caputo sense, also known as 
Dzherbashyan-Caputo fractional derivative). 
We first obtain the moment generating function and  the 
probability law of the process, and discuss its equivalent representation 
in terms of a subordinator governed by a suitable fractional Cauchy problem. 
\\
\par
Along the same lines as \cite{BeghinOrsingher2010}, in Section \ref{Section3} we 
consider the difference-differen\-tial equations governing the probability mass function of 
$ M^{\nu}(t)$ and involving the time-fractional derivative of order $ \nu\in(0,1] $. 
The solution of the resulting Cauchy problem represents the 
probability distribution of the fractional counting process  $ M^{\nu}(t) $. Hence, we obtain 
$\mathbb{E}\left[e^{s M^{\nu}(t)} \right]$ and 
$ p_{\, k}^{\, \nu }(t)=\mathbb{P}\left \{ M^{\nu}\left ( t \right )= k \right \} $ in terms of a generalized Mittag-Leffler function.
We also show two useful representations for $ M^{\nu}(t) $: \\
(i)  We prove that $ M^{\nu}(t) $ can be expressed as a compound fractional Poisson process. This representation 
is essential to obtain a waiting time distribution. \\
(ii)  We show that $ M^{\nu}(t) $ can be regarded as a homogeneous Poisson process with $k$ kinds of jumps stopped at a random time. Such random time is the sole component of this subordinating relationship affected by the fractional derivative, since its distribution is obtained from the fundamental solution of a fractional diffusion equation. \\
\par
In Section \ref{Section4} we  face the problem of determining certain waiting time and 
first-passage-time distributions. Specifically, we evaluate the probability that the first jump 
of size $ j$, $j=1,2, \ldots, k$, for the process $ M^{\nu}(t) $ occurs before time $ t>0$. 
Interestingly, we prove that the first occurrence time of a jump of amplitude $j$ has the same 
distribution as the waiting time of the first event of the classical fractional Poisson 
process defined with parameter $\lambda_j$, for $j\in\{1,2,\ldots,k\}$. This  is an immediate extension of a 
well-known result. Indeed, for a Poisson process with intensity $\lambda_1+\lambda_2$ and such that its events 
are classified as type $j$ via independent Bernoulli trials with probability $\frac{\lambda_j}{\lambda_1+\lambda_2}$, 
the first occurrence time of an event of type $j$ is distributed as the interarrival time of a Poisson 
process  with intensity $\lambda_j$, $j=1,2$. In Theorem \ref{th:WaitingTime} we extend this result to the 
fractional setting. The remarkable difference is that the exponential density of the interarrival times of the Poisson 
process is replaced by the corresponding density of the fractional Poisson process, which depends 
on the (two-parameter) Mittal-Leffler function. In Section \ref{Section4} we also study, when $k=2$, the distribution 
of the first passage time of $ M^{\nu}(t) $ to a fixed level. We express it in an integral 
form which involves the joint distribution of the fractional Poisson process.
\\
\par
Finally, in Section \ref{Section5} we  obtain a formal expression of the moments of $ M^{\nu}(t) $, and 
show that both the ratios given by the powers of the fractional Poisson process and of the 
process $ M^{\nu}(t) $ over their means tend to 1 in probability. This result 
is useful in some applications. In fact, from a physical point of view, it means that the distance between 
the distributions of such processes at time $ t $ and their equilibrium measures is close to 1 until some 
deterministic `{cutoff time}' and is close to 0 shortly after. 
%
%%%%%%%%%%%%%%%%%%%
% \section{Background on the fractional Poisson process}\label{Section2}
%%%%%%%%%%%%%%%%%%%%
\\
\par
In the remaining part of this section we briefly recall some well-known results on the fractional Poisson 
process which will be used throughout the paper. 
Consider the fractional Poisson process
\begin{equation}\label{PoissonFractional}
\left \{ N_{\,\lambda}^{\,\nu }(t) ; t\geq 0\right \},\qquad\nu\in(0,1],\,\lambda\in(0,\infty),
\end{equation}
namely the renewal process with i.i.d.\ interarrival times $ \text{\rsfsten U}_{j} $ distributed according to the 
following density, for $ j=1,2,\ldots $ and $ t\in(0,\infty)$ (see \cite{BeghinOrsingher2010}): 
\begin{equation}\label{InterArrivalDensity}
f_{\,1}^{\,\nu }\left ( t \right )= \mathbb{P}\left \{ \text{\rsfsten U}_{j}\in d\,t \right \}/d\,t=\lambda t^{\nu -1}E_{\nu ,\nu }(-\lambda t^{\nu }),
\end{equation}
where 
\begin{equation*} %MittagLeffler
E_{\alpha,\beta}(x)=\sum_{r=0}^{\infty}\frac{x^{r}}{\Gamma(\alpha r+\beta )}, \qquad \alpha,\beta \in \mathbb{C},\,Re(\alpha),Re(\beta)>0,\,x\in\mathbb{R}
\end{equation*}
is the (two-parameter) Mittag-Leffler function. From the Laplace transform
\begin{equation*}
\text{\rsfsten L}\left \{ f_{\,1}^{\,\nu }\left ( t \right );s \right \}= \frac{\lambda }{s^{\,\nu }+\lambda}
\end{equation*}
it follows that the density of the waiting time of the $k$-th event, $ T_{k}= \sum_{j= 1}^{k}\text{\rsfsten U}_{j} $, possesses the Laplace transform
\begin{equation*}
\text{\rsfsten L}\left \{ f_{\,k}^{\,\nu }\left ( t \right );s \right \}= \frac{\lambda^{k} }{\left (s^{\,\nu }+\lambda\right )^{k}}.
\end{equation*}
Its inverse can be obtained by applying formula (2.5) of \cite{Prabhakar1971}, i.e.
\begin{equation}\label{InverseLaplace}
\text{\rsfsten L}\left \{ t^{\gamma -1}E_{\,\beta ,\gamma }^{\,\delta  }\left ( \omega t^{\beta } \right );s \right \}= \frac{s^{\,\beta \delta -\gamma }}{\left ( s^{\,\beta }-\omega  \right )^{\delta }},
\end{equation}
(where $ Re(\beta)>0,\,Re(\gamma)>0,\, Re(\delta)>0$ and $ s> \left | \omega  \right |^{\frac{1}{ Re(\beta)}} $). By setting $ \beta=\nu,\, \gamma=k\nu,\, \delta=k $ and $ \omega=-\lambda $ we have
\begin{equation}\label{densitatempoarrivo}  
f_{\,k}^{\,\nu }\left ( t \right )=\mathbb{P}\left \{ T_{k}\in d\,t \right \}/d\,t= \lambda^{k}t^{\,k\nu-1}E_{\,\nu ,k\nu  }^{\,k}(-\lambda t^{\nu }),
\end{equation}
\\
where
\begin{equation}\label{GML} %GML
E_{\alpha ,\beta }^{\gamma }(z)=\sum_{r=0}^{\infty}\frac{(\gamma )_{r}\, z^{r}}{r!\,\Gamma(\alpha r+\beta )}, \quad \alpha,\beta,\gamma \in\mathbb{C},\;Re(\alpha),Re(\beta),Re(\gamma)>0
\end{equation}
is a generalized Mittag-Leffler function and, as usual,
$(\gamma)_{r}=\gamma(\gamma+1)\dots(\gamma+r-1)$, $r=1,2,\dots,$
$(\gamma)_{0}=1$, is the Pochhammer symbol.
\\\\
The corresponding distribution function can be obtained by integrating  (\ref{densitatempoarrivo}), thus obtaining 
(see Eq.\ (2.20) of \cite{BeghinOrsingher2010})
\begin{equation}\label{PDFTempoArrivo} %come ottengo la funzione di distribuzione
F_{\,k}^{\,\nu}(t) =\mathbb{P}\left \{ T_{k}<t \right \}\\ =\lambda^{k}t^{k\nu}E_{\nu ,k\nu +1 }^{k}(-\lambda t^{\nu }).
\end{equation}
%
%It is interesting to point out that the density (\ref{InterArrivalDensity}) of the interarrival times $ \text{\rsfsten U}_{j} $ is completely monotone, and therefore log-convex. In fact, the function $ t^{\beta -1}E_{\alpha ,\beta }^{\gamma }(-\lambda t^{\nu}) $, for all $ \lambda>0 $, is completely monotone if and only if $ 0< \alpha ,\beta \leq 1 $ and $ 0<\gamma \leq \beta/\alpha $ (cf. Chapter 5 of Gorenflo {\em et al.}\ \cite{Gorenflo2014}). 
%We recall that random variables with log-convex densities are also said to have the \textit{decreasing likelihood ratio} (DLR) property.
Taking into account (\ref{PDFTempoArrivo}), the probability mass function of the process $ N_{\,\lambda}^{\,\nu }(t) $ can be easily computed as follows (see, also, Eq.\ (2.21) of \cite{BeghinOrsingher2010}):
\begin{equation}\label{PFpmf}
\mathbb{P}\left \{ N_{\,\lambda}^{\,\nu }(t)= n \right \}=\mathbb{P}\left ( T_{n}\leq t< T_{n+1} \right)=\left (\lambda t^{\nu }  \right )^{n}E_{\nu,n\nu +1 }^{n+1 }(-\lambda t^{\nu }).
\end{equation}
Moreover, recalling Eq.\ (2.29) of \cite{BeghinOrsingher2010}, we have that 
the moment generating function of the process $ N_{\,\lambda}^{\,\nu }(t) $, $t\geq 0$, can be expressed as 
\begin{equation}\label{MgfPF}
\mathbb{E}\left[e^{s N_{\,\lambda}^{\,\nu }(t)} \right]
 = E_{\nu ,1}\left ( \lambda\left ( e^{s}-1 \right )t^{\,\nu } \right ),
 \qquad s\in\mathbb{R}. 
\end{equation}
The mean and the variance of $ N_{\,\lambda}^{\,\nu }(t) $ read (see Eqs.\ (2.7) and (2.8) of \cite{BeghinOrsingher2009})
\begin{equation}\label{MeanVarPoissonFrac}
\mathbb{E}\left [ N_{\,\lambda}^{\,\nu }(t) \right ]=\frac{\lambda t^{\nu }}{\Gamma \left ( \nu +1 \right )}, 
\qquad 
\mathrm{Var}\left [ N_{\,\lambda}^{\,\nu }(t) \right ]=\frac{2\left ( \lambda t^{\nu } \right )^{2}}{\Gamma \left ( 2\nu +1 \right )}-\frac{\left (\lambda t^{\nu }  \right )^{2}}{\left ( \Gamma \left ( \nu +1 \right ) \right )^{2}}+\frac{\lambda t^{\nu }}{\Gamma \left ( \nu +1 \right )}.
\end{equation}
In general, the analytical expression for the \textit{m}th order moment of the fractional Poisson process is given by (cf. \cite{Laskin2009}, Eq.\ (40))
\begin{equation}\label{moments}
 \mathbb{E}\left [ \left ( N_{\,\lambda}^{\,\nu }(t) \right)^m\right ]
= \sum_{l=0}^{m}\textit{S}_{\nu }\left ( m,l \right )\left ( \lambda t^{\nu} \right )^{l},
\end{equation}
where $ \textit{S}_{\nu }\left ( m,l \right ) $ is the fractional Stirling number defined by Eq.\ (32) of \cite{Laskin2009}.

%%%%%%%%%%%%%%%%%%%
\section{Fractional counting process}\label{Section3}
%%%%%%%%%%%%%%%%%%%
Let $\{M^1(t); t\geq 0\}$ be a counting process defined by  following rules:
\begin{enumerate}
\item $ M^1(0)=0\;\; $ a.s.;
\item $M^1(t)$ has stationary and independent increments;
\item $ \mathbb{P}\{ M^1(h)= j \}= \lambda _{j}h+\text{o}  ( h ) $, \ for $j=1,2,\ldots, k$; 
\item $ \mathbb{P}\{M^1(h)>k \}= \text{o} (h)$, 
\end{enumerate}
where $k\in\mathbb{N}\equiv\{1,2,\ldots\}$ is fixed, and $ \lambda_{1} ,\lambda_{2}, \ldots, \lambda_{k}>0$. 
From the above assumptions we have that the probability distribution 
$p_{\, j}(t)=\mathbb{P}\left \{ M^1\left ( t \right )= j \right \}$, 
for $j\in \mathbb{N}_0\equiv\{0,1,2,\ldots\}$, satisfies the following system of difference-differential equations:
\begin{equation}\label{SysteqM1}
\dfrac{\mathrm{d}p_{j}(t) }{\mathrm{d}t}
=\sum_{r=1}^{k} \lambda_{r}\,p_{\,j-r}(t)-(\lambda_{1}+\ldots+\lambda_{k})\,p_{j}(t),\qquad  t>0,
\end{equation}
where $p_{\,j}(t)=0$ for $j<0$. 
\\
\par
In this section we examine a fractional extension of $\{M^1(t); t\geq 0\}$. 
We obtain a proper probability distribution and explore the main properties of the corresponding fractional process. 
%%%%%%%%%%%%%%%%%%
\subsection{The probability law}
%%%%%%%%%%%%%%%%%%%
With reference to the fractional derivatives  
\begin{equation*}
\frac{\mathrm{d}^{\nu }f\left ( t \right ) }{\mathrm{d} t^{\nu }}=
\begin{cases}
\frac{1}{\Gamma \left ( 1-\nu  \right )}\int_{0}^{t}\frac{\left ( \mathrm{d}/\mathrm{d}s \right )f\left ( s \right )}{\left ( t-s \right )^{\nu }}\,\mathrm{d}s,& 0< \nu < 1,\\[0.3cm] 
f{}\,'\left ( t \right )&\nu=1,
\end{cases}
\end{equation*}
let us now introduce a fractional extension of the process $M^1(t)$. For all fixed $\nu\in (0,1]$ and 
$k\in\mathbb{N}$, let $\{M^{\nu}(t); t\geq 0\}$ be a counting process, and assume that the probability distribution  
\begin{equation}\label{FracPoisson2}
p_{\, j}^{\, \nu }(t)=\mathbb{P}\left \{ M^{\nu}\left ( t \right )= j \right \}, 
\qquad  j\in \mathbb{N}_0 
\end{equation}
satisfies the following system of fractional difference-differential equations 
\begin{equation}  %equazioni Poisson frazionario piðËü tassi con derivate
\label{Pfraz}
\begin{cases}
\dfrac{\mathrm{d}p_{\,0}^{\, \nu }(t) }{\mathrm{d}t^{\nu} }=-\Lambda\, p_{\,0}^{\, \nu }(t)
\\[0.3cm] 
\dfrac{\mathrm{d}p_{\,j}^{\, \nu }(t) }{\mathrm{d}t^{\nu} }
=\sum_{r=1}^{j} \lambda_{r}\,p_{\,j-r}^{\, \nu }(t)-\Lambda\,p_{\,j}^{\, \nu }(t),& \quad \text{$ j=1,2,\dots,k-1 $}
\\[0.3cm] 
\dfrac{\mathrm{d}p_{\,j}^{\, \nu }(t) }{\mathrm{d}t^{\nu} }
=\sum_{r=1}^{k} \lambda_{r}\,p_{\,j-r}^{\, \nu }(t)-\Lambda\,p_{\,j}^{\, \nu }(t),& \quad \text{$ j=k,k+1,\dots $},
\end{cases}
\end{equation}
for $\Lambda= \lambda_{1}+\lambda_{2}+ \ldots+\lambda_{k}$, together with the condition
\begin{equation}\label{InitialConditionsPoissonFrac} % condizioni iniziali
p_{\, j}(0)=
\begin{cases}
1, & \text{$ j=0 $} \\
0, & \text{$ j\geq 1 $.}
\end{cases}
\end{equation}
Clearly, when $\nu=1$ the system (\ref{Pfraz}) identifies with the  difference-differential equations 
of process $M^1(t)$ given in (\ref{SysteqM1}). Furthermore, when $k=1$ the process $M^{\nu}(t)$ 
identifies with the process $N_{\,\lambda}^{\,\nu }(t)$ considered in Section \ref{Section1}. 
\par
Hereafter we will obtain the solution to (\ref{Pfraz})-(\ref{InitialConditionsPoissonFrac}) in 
terms of the generalized Mittag-Leffler function (\ref{GML}) and show that it represents a true probability 
distribution of $ M^{\nu}(t)$. To this purpose we first obtain the moment generating function of $M^{\nu}(t)$ 
in terms of the Mittag-Leffler function. 
\begin{proposition}\label{fgmfraz}
For all fixed $\nu\in (0,1]$ and $k\in\mathbb{N}$, the moment generating function of $M^{\nu}(t)$ is given by 
\begin{equation}\label{fgmomM}
 \mathbb{E}\left[e^{s M^{\nu}(t)} \right]
 = E_{\,\nu,1}\Big( \sum_{j=1}^k \lambda _{j} \left( e^{j s}-1 \right )\,t^{\,\nu } \Big ),
 \qquad t\geq 0, \;\; s\in\mathbb{R}.
\end{equation}
\end{proposition}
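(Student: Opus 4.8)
The plan is to collapse the infinite system (\ref{Pfraz}) into a single scalar fractional Cauchy problem for the generating function and then read off the Mittag-Leffler solution. First I would introduce $G(s,t)=\mathbb{E}[e^{sM^{\nu}(t)}]=\sum_{j\ge 0}e^{js}p_{\,j}^{\,\nu}(t)$, so that (\ref{InitialConditionsPoissonFrac}) gives $G(s,0)=1$. The key preliminary remark is that, under the convention $p_{\,\ell}^{\,\nu}(t)=0$ for $\ell<0$ already adopted in (\ref{Pfraz}), the three branches of that system merge into the single recurrence
\[
\frac{\mathrm{d}^{\nu}}{\mathrm{d}t^{\nu}}\,p_{\,j}^{\,\nu}(t)=\sum_{r=1}^{k}\lambda_{r}\,p_{\,j-r}^{\,\nu}(t)-\Lambda\,p_{\,j}^{\,\nu}(t),\qquad j\ge 0 .
\]
Indeed, for $0\le j\le k-1$ the terms with $r>j$ carry a negative index $j-r$ and hence vanish, reproducing the first two lines, while for $j=0$ the whole sum is empty.

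Next I would multiply this recurrence by $e^{js}$ and sum over $j\ge 0$, interchanging the Caputo operator with the series. The shift $m=j-r$ in the double sum, together with $p_{\,m}^{\,\nu}\equiv 0$ for $m<0$, factors the inner summation as $\sum_{m\ge 0}e^{(m+r)s}p_{\,m}^{\,\nu}(t)=e^{rs}G(s,t)$, so that
\[
\frac{\mathrm{d}^{\nu}}{\mathrm{d}t^{\nu}}\,G(s,t)=\Big(\sum_{r=1}^{k}\lambda_{r}e^{rs}-\Lambda\Big)G(s,t).
\]
Since $\Lambda=\sum_{r=1}^{k}\lambda_{r}$, the bracket equals $C(s):=\sum_{r=1}^{k}\lambda_{r}(e^{rs}-1)$, and $G(\cdot,t)$ solves, with $s$ as a parameter, the scalar problem $\frac{\mathrm{d}^{\nu}}{\mathrm{d}t^{\nu}}G=C(s)\,G$ with $G(s,0)=1$.

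Finally I would invoke the standard fact that the unique solution of $\frac{\mathrm{d}^{\nu}}{\mathrm{d}t^{\nu}}g(t)=c\,g(t)$, $g(0)=1$, is the Mittag-Leffler eigenfunction $g(t)=E_{\,\nu,1}(c\,t^{\nu})$. This is verified by applying the Caputo derivative term by term to the series for $E_{\,\nu,1}$, using $\frac{\mathrm{d}^{\nu}}{\mathrm{d}t^{\nu}}t^{\nu n}=\frac{\Gamma(\nu n+1)}{\Gamma(\nu(n-1)+1)}\,t^{\nu(n-1)}$ (the constant term contributing $0$), which after re-indexing returns $c\,E_{\,\nu,1}(c\,t^{\nu})$. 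Taking $c=C(s)$ gives precisely (\ref{fgmomM}).

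The delicate step is the interchange of the nonlocal Caputo derivative with the infinite series defining $G$: as this operator integrates the ordinary derivative against the kernel $(t-\tau)^{-\nu}/\Gamma(1-\nu)$, term-by-term differentiation requires a summable bound on $p_{\,j}^{\,\nu}$ and its derivative, uniform in $j$, and hence convergence of $G(s,t)$ near $s=0$; the resulting identity then extends to all $s\in\mathbb{R}$ by analyticity. I would also stress that the zero-padding convention for negative indices is exactly what reconciles the boundary cases $0\le j\le k-1$ of (\ref{Pfraz}) with the single recurrence, and is therefore indispensable to the index shift.
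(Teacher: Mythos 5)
Your proposal is correct and follows essentially the same route as the paper: both reduce the system (\ref{Pfraz}) to the scalar fractional Cauchy problem $\frac{\mathrm{d}^{\nu}}{\mathrm{d}t^{\nu}}G = \big(\sum_{j=1}^{k}\lambda_j(e^{js}-1)\big)G$, $G(s,0)=1$, for the generating function. The only (minor) divergence is in solving that scalar problem: the paper takes the Laplace transform and inverts via Prabhakar's formula (\ref{InverseLaplace}), whereas you verify directly that $E_{\nu,1}(c\,t^{\nu})$ is the Caputo eigenfunction by term-by-term differentiation --- an equally valid and slightly more self-contained finish.
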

\begin{proof}
From system (\ref{Pfraz}) and condition (\ref{InitialConditionsPoissonFrac}) we have that the 
probability generating function $G(z,t):= \mathbb{E}\left[z^{M^{\nu}(t)} \right]$ satisfies the Cauchy problem
\begin{equation*}   
\begin{cases}
\displaystyle\frac{\partial G(z,t)}{\partial t^{\nu}}
=- \sum_{j=1}^k \lambda _{j} \left(1-z^{j} \right )\,G(z,t)
\\[0.3cm] 
G(z,0)=1.
\end{cases}
\end{equation*}
By adopting a Laplace-transform approach we obtain 
\begin{equation*}
 \text{\rsfsten L}\left \{ G(z,t);s \right \}
 = \frac{s^{\nu-1} }{s^{\nu}+ \sum_{j=1}^k \lambda_j(1-z^j) }.
\end{equation*}
Eq.\ (\ref{fgmomM}) thus follows recalling Eq.\ (\ref{InverseLaplace}). 
\end{proof}
\par
%Since $ E_{\,\nu ,1}(0)=1 $, formula (\ref{pgfPoissonFrac}) is useful in checking that $ p_{\,j}^{\,\nu}\left ( t \right ) $ represents a true probability distribution.

We remark that the use of the Caputo fractional derivative permits us to avoid fractional initial conditions in the 
previous proof since, in general,
\begin{equation*}
\text{\rsfsten L}\left\{f^{\nu};s\right\}=s^{\nu}\text{\rsfsten L}\left\{f;s\right\}-s^{\nu-1}f\bigg|_{x=0},\qquad\nu\in(0,1].
\end{equation*}
\par
Let us now show that $M^{\nu}(t)$ can be expressed as a compound fractional Poisson process. 
\begin{proposition}\label{CFPRemark} 
For all fixed $\nu\in (0,1]$ we have 
\begin{equation}\label{CompoundProcess}
M^{\nu}(t)\overset{d}{=}\sum_{i=1}^{N_{\,\Lambda}^{\,\nu }(t)}X_{i},\qquad t\geq 0,
\end{equation}
where $ N_{\,\Lambda}^{\,\nu }(t) $ is a fractional Poisson process, defined as in (\ref{PoissonFractional}), with intensity $ \Lambda= \lambda_{1}+\lambda_{2}+ \ldots+\lambda_{k}$. 
Moreover, $ \left \{ X_{n}:n\geq 1 \right \} $ is a sequence of i.i.d.\ random variables, independent of 
$ N^{\,\nu}_{\,\Lambda}(t)$, such that for any $ n\in\mathbb{N}$
\begin{equation}\label{Jumps}
\mathbb{P}\{X_{n}=j\}=\frac{\lambda _{j}}{\Lambda}, 
\qquad j=1,2,\ldots,k 
\end{equation}
and where both $ N^{\,\nu}_{\,\Lambda}(t) $ and $ X_{n} $ depend on the same parameters 
$ \lambda_{1},\lambda_{2}, \ldots,\lambda_{k}$.
\end{proposition}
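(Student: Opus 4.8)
The plan is to identify the two laws through their moment generating functions, leaning on the mgf of $M^{\nu}(t)$ already computed in Proposition \ref{fgmfraz}. Since both sides of (\ref{CompoundProcess}) are supported on $\mathbb{N}_0$ and admit a finite mgf for every $s\in\R$, it suffices to show that the mgf of the compound sum coincides with (\ref{fgmomM}) at each fixed $t\geq 0$; this then yields equality of the one-dimensional distributions, which is the content of the claim.

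First I would condition on the value of the driving fractional Poisson process. Writing $S(t)=\sum_{i=1}^{N_{\,\Lambda}^{\,\nu}(t)}X_i$ and using the independence of $\{X_n\}$ from $N_{\,\Lambda}^{\,\nu}(t)$ together with the i.i.d.\ assumption, a standard Wald-type conditioning gives
\[
\mathbb{E}\big[e^{sS(t)}\big]=\mathbb{E}\Big[\big(\mathbb{E}[e^{sX_1}]\big)^{N_{\,\Lambda}^{\,\nu}(t)}\Big].
\]
By the jump law (\ref{Jumps}) the inner mgf is the finite sum $\mathbb{E}[e^{sX_1}]=\Lambda^{-1}\sum_{j=1}^k\lambda_j e^{js}$, so that, denoting this quantity by $z$, the right-hand side is precisely the probability generating function of $N_{\,\Lambda}^{\,\nu}(t)$ evaluated at $z$.

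Next I would read off that generating function from the known mgf (\ref{MgfPF}) of the fractional Poisson process: replacing $e^s$ by $z$ and $\lambda$ by $\Lambda$ yields $\mathbb{E}\big[z^{N_{\,\Lambda}^{\,\nu}(t)}\big]=E_{\nu,1}\big(\Lambda(z-1)t^{\nu}\big)$. Substituting $z=\Lambda^{-1}\sum_{j=1}^k\lambda_j e^{js}$ and using $\Lambda=\sum_{j=1}^k\lambda_j$, the argument simplifies as
\[
\Lambda(z-1)=\sum_{j=1}^k\lambda_j e^{js}-\Lambda=\sum_{j=1}^k\lambda_j\big(e^{js}-1\big),
\]
whence
\[
\mathbb{E}\big[e^{sS(t)}\big]=E_{\nu,1}\Big(\sum_{j=1}^k\lambda_j\big(e^{js}-1\big)\,t^{\nu}\Big).
\]
This is exactly the mgf of $M^{\nu}(t)$ in (\ref{fgmomM}), completing the identification.

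I expect no serious obstacle: the computation is a direct substitution into already-established formulas. The only point requiring a word of care is the passage from equality of mgfs to equality in distribution, which is justified because the relevant laws are supported on $\mathbb{N}_0$ and the associated generating functions are entire in $z$, hence determined by their power-series coefficients. A secondary bookkeeping point is to confirm that the conditioning step remains valid for $\nu<1$, where $N_{\,\Lambda}^{\,\nu}(t)$ is non-Markovian; but the conditioning uses only the independence of the summands $\{X_n\}$ from the counter, which holds by hypothesis, so the non-Markov nature of $N_{\,\Lambda}^{\,\nu}(t)$ plays no role.
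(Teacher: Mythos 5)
Your proposal is correct and follows essentially the same route as the paper: condition on $N_{\,\Lambda}^{\,\nu}(t)$ to get $\mathbb{E}\big[(\mathbb{E}[e^{sX_1}])^{N_{\,\Lambda}^{\,\nu}(t)}\big]$, compute $\mathbb{E}[e^{sX_1}]=\Lambda^{-1}\sum_{j=1}^k\lambda_j e^{js}$, and invoke (\ref{MgfPF}) to identify the result with (\ref{fgmomM}). The only (welcome) addition is your explicit remark that equality of mgfs on $\mathbb{N}_0$-supported laws yields equality in distribution, a step the paper leaves implicit.
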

\begin{proof}
The moment generating function of $Y(t):=\sum_{i=1}^{N_{\,\Lambda}^{\,\nu }(t)}X_{i}$, $t\geq 0$, 
can be expressed as 
\begin{align}
\mathbb{E}\left [ e^{s Y(t)} \right ]
& = \mathbb{E}\left[\mathbb{E}\left [ e^{s Y(t)} \Big| N_{\,\Lambda}^{\,\nu }(t)\right ]\right]
\notag\\
&= \mathbb{E}\left[\left(\mathbb{E}\left [e^{s X_1}\right]\right)^{N_{\,\Lambda}^{\,\nu }(t)}\right].
\notag
\end{align} 
Hence, since 
$$
 \mathbb{E}\left [e^{s X_1}\right]=\frac{1}{\Lambda}\,\sum_{j=1}^k \lambda_j \,e^{j s},
$$
we have 
$$
\mathbb{E}\left [ e^{s Y(t)} \right ]
=\mathbb{E}\left [e^{N_{\,\Lambda}^{\,\nu }(t)\ln \left(\frac{1}{\Lambda}\,\sum_{j=1}^k \lambda_j \,e^{j s}\right)}\right ].
$$
Finally, making use of Eq.\  (\ref{MgfPF}) we immediately obtain that the 
moment generating function of $Y(t)$ identifies with the right-hand-side of (\ref{fgmomM}). 
This completes the proof. 
\end{proof}
\par
We remark that, due to Proposition \ref{CFPRemark}, $ M^{\nu}(t) $ can be regarded as a special case of the process defined in Eq.\ (7) of \cite{BeghinMacci2014}, under a suitable choice of the probability mass function $ \left ( q_{k}\right )_{k\geq 1} $ and the parameter $ \lambda $. Furthermore, 
according to Definition 7.1.1 of \cite{BeningKorolev}, the process $ M^{\nu}(t) $ is a compound Cox process, since \cite{BeghinOrsingher2010} show that $ N_{\,\Lambda }^{\,\nu }\left ( t \right ) $ is a Cox process with a proper directing measure. Moreover, $ M^{\nu}(t) $ is a compound fractional process, and thus it is neither Markovian nor L$ \grave{\mathrm{e}} $vy (cf. \cite{Scalas2011}).
\par
We are now able to obtain the probability mass function (\ref{FracPoisson2}) of $ M^{\nu}(t) $. Indeed, the following Proposition holds true.
\begin{proposition}\label{prop:pmf}
The solution $ p_{\, j}^{\, \nu }(t)$ of the Cauchy problem (\ref{Pfraz})-(\ref{InitialConditionsPoissonFrac}), 
for $ j\in \mathbb{N}_0$, $\nu\in (0,1]$ and $ t\geq 0 $, is given by
\begin{equation}\label{pmfPoissonFrac}
p_{\, j}^{\, \nu }(t)
=\sum_{r=0}^{j}\, \sum_{\substack{\alpha_{1}+\alpha_{2}+\ldots+\alpha_{k}=r\\\alpha_{1}+2\alpha_{2}+\ldots+k\alpha_{k} =j}}\binom{r}{\alpha_{1},\alpha_{2},\ldots,\alpha_{k}}\, \lambda _{1}^{\alpha_{1}}\lambda _{2}^{\alpha _{2}}\ldots\lambda _{k}^{\alpha _{k}}\, t^{r\nu}E_{\nu,r\nu +1 }^{r+1 }(-\Lambda t^{\nu }).
\end{equation}
\end{proposition}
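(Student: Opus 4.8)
The plan is to bypass any direct manipulation of the fractional difference-differential system (\ref{Pfraz}) and instead exploit the compound representation already established in Proposition \ref{CFPRemark}. Since that proposition gives $M^{\nu}(t)\overset{d}{=}\sum_{i=1}^{N_{\,\Lambda}^{\,\nu }(t)}X_{i}$, the law $p_{\,j}^{\,\nu}(t)=\mathbb{P}\{M^{\nu}(t)=j\}$ coincides at each fixed $t$ with the law of the compound fractional Poisson process $Y(t)$, so it suffices to compute $\mathbb{P}\{Y(t)=j\}$ explicitly.

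First I would condition on the number of jumps $N_{\,\Lambda}^{\,\nu }(t)$, writing
\begin{equation*}
p_{\,j}^{\,\nu}(t)=\sum_{r=0}^{\infty}\mathbb{P}\{N_{\,\Lambda}^{\,\nu }(t)=r\}\,\mathbb{P}\Big\{\textstyle\sum_{i=1}^{r}X_{i}=j\Big\}.
\end{equation*}
Because each $X_{i}$ takes values in $\{1,2,\ldots,k\}$ and is therefore at least $1$, the partial sum $\sum_{i=1}^{r}X_{i}$ is at least $r$; hence $\mathbb{P}\{\sum_{i=1}^{r}X_{i}=j\}$ vanishes whenever $r>j$, and the series truncates to the finite range $0\le r\le j$, which is exactly the outer sum in (\ref{pmfPoissonFrac}).

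The combinatorial core of the argument is the evaluation of $\mathbb{P}\{\sum_{i=1}^{r}X_{i}=j\}$. I would classify the configurations $(X_{1},\ldots,X_{r})$ according to the vector of multiplicities $(\alpha_{1},\ldots,\alpha_{k})$, where $\alpha_{\ell}=\#\{i:X_{i}=\ell\}$. Each admissible configuration satisfies the two constraints $\alpha_{1}+\cdots+\alpha_{k}=r$ (the number of jumps) and $\alpha_{1}+2\alpha_{2}+\cdots+k\alpha_{k}=j$ (their total amplitude); the number of ordered tuples realizing a given multiplicity vector is the multinomial coefficient $\binom{r}{\alpha_{1},\ldots,\alpha_{k}}$, and by independence each such tuple has probability $\prod_{\ell=1}^{k}(\lambda_{\ell}/\Lambda)^{\alpha_{\ell}}$, in view of (\ref{Jumps}). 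This reproduces the inner doubly-constrained sum in (\ref{pmfPoissonFrac}).

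Finally, I would insert the fractional Poisson mass function $\mathbb{P}\{N_{\,\Lambda}^{\,\nu }(t)=r\}=(\Lambda t^{\nu})^{r}E_{\nu,r\nu+1}^{r+1}(-\Lambda t^{\nu})$ from (\ref{PFpmf}) and simplify the prefactor: since $\sum_{\ell}\alpha_{\ell}=r$, one has $(\Lambda t^{\nu})^{r}\prod_{\ell=1}^{k}(\lambda_{\ell}/\Lambda)^{\alpha_{\ell}}=t^{r\nu}\prod_{\ell=1}^{k}\lambda_{\ell}^{\alpha_{\ell}}$, the factors of $\Lambda$ cancelling exactly. Collecting terms then yields formula (\ref{pmfPoissonFrac}). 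I expect the only delicate point to be the bookkeeping in the combinatorial step, namely correctly encoding both the ``number of jumps equals $r$'' and ``total amplitude equals $j$'' constraints and confirming that the multinomial weights match; the truncation of the series and the cancellation of the $\Lambda$ factors are routine. As a consistency check one could verify that summing (\ref{pmfPoissonFrac}) over $j$ returns $\sum_{r}\mathbb{P}\{N_{\,\Lambda}^{\,\nu }(t)=r\}=1$, confirming that $p_{\,j}^{\,\nu}(t)$ is a genuine probability distribution.
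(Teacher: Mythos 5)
Your proposal is correct and follows essentially the same route as the paper's own proof: conditioning on $N_{\,\Lambda}^{\,\nu}(t)$ via the compound representation of Proposition \ref{CFPRemark}, evaluating $\mathbb{P}\{X_1+\cdots+X_r=j\}$ by the doubly-constrained multinomial sum, and substituting the fractional Poisson mass function (\ref{PFpmf}). The only difference is that you spell out the truncation at $r=j$ and the cancellation of the $\Lambda$ factors, which the paper leaves implicit.
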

\begin{proof}
From (\ref{CompoundProcess}) and from a conditioning argument we have
$$
p_{\, j}^{\, \nu }(t)=\mathbb{P}\left \{ M^{\nu}\left ( t \right )= j \right \}= \sum_{r=0}^{j}\mathbb{P}\left \{ X_{1}+X_{2}+\ldots+X_{r}=j \right \}\mathbb{P}\left \{ N_{\,\Lambda}^{\,\nu }(t)= r \right \}.
$$
Since $ X_{1},X_{2},\ldots, X_{r} $ are independent and identically distributed (cf.\ (\ref{Jumps})), it follows that
\begin{align*}
 \mathbb{P}\left \{ X_{1}+X_{2}+\ldots+X_{r}=j \right \}  
 =&  \sum_{\substack{\alpha_{1}+\alpha_{2}+\ldots+\alpha_{k}=r\\\alpha_{1}+2\alpha_{2}+\ldots+k\alpha_{k} =j}}\binom{r}{\alpha_{1},\alpha_{2},\ldots,\alpha_{k}} \\
 &\times  \left ( \frac{\lambda _{1}}{\Lambda } \right )^{\alpha_{1}}\left ( \frac{\lambda_{2}}{\Lambda } \right )^{\alpha _{2}}\ldots \left ( \frac{\lambda _{k}}{\Lambda } \right )^{\alpha_{k}},
\end{align*}
where the sum is taken in order to consider all the possible ways of having $ r $ jumps, with 
$ \alpha_{1} $ jumps of size 1, $\ldots$, $\alpha_{k} $ jumps of size $ k $, and such that the total amplitude, i.e.\ 
$ \alpha_{1}+2\alpha_{2}+\ldots+k\alpha_{k} $, equals $ j $. 
Hence, recalling formula (\ref{PFpmf}), the Proposition follows.
\end{proof}
\par
Proposition \ref{prop:pmf} is an extension of Proposition 2 of \cite{DiCrMaMe}, which is concerning 
with case $k=2$. 
Some plots of probabilities (\ref{pmfPoissonFrac}) are shown in Figure \ref{fig:prob} and Figure \ref{fig:prob2}.
\par
From (\ref{pmfPoissonFrac}) we note that, for $\nu\in (0,1]$,  
$$
 p_{\,0}^{\, \nu }(t)=E_{\nu,1 }(-\Lambda t^{\nu }), \qquad t\geq 0. 
$$
Moreover, making use of  Eqs.\ (\ref{GML}) and (\ref{pmfPoissonFrac}) we obtain hereafter the 
distribution of the process $ M^{\nu}(t) $ in the special case $\nu=1$. 
\begin{corollary}
The  probability mass function  $ p_{\, j}^{\,1}(t)$, for $ j\in \mathbb{N}_0$ and $ t\geq 0 $, is given by 
\begin{equation}\label{pmfPoissonFrac1}
p_{\, j}^{\, 1}(t)
=\sum_{r=0}^{j}\, 
\sum_{\substack{\alpha_{1}+\alpha_{2}+\ldots+\alpha_{k}=r\\\alpha_{1}+2\alpha_{2}+\ldots+k\alpha_{k} =j}}
\frac{\lambda _{1}^{\alpha_{1}}\lambda _{2}^{\alpha _{2}}\ldots\lambda _{k}^{\alpha _{k}}}
{\alpha_{1}!\,\alpha_{2}!\,\ldots\,\alpha_{k}!}\,  t^{r} e^{-\Lambda t}.
\end{equation}
\end{corollary}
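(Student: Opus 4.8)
The plan is to specialize the general formula (\ref{pmfPoissonFrac}) of Proposition \ref{prop:pmf} to $\nu=1$. The only object in that formula that still depends on $\nu$ in a nontrivial way is the generalized Mittag-Leffler function $E_{\nu,r\nu+1}^{r+1}(-\Lambda t^{\nu})$; the combinatorial sum, the monomials in the $\lambda_j$, and the power $t^{r\nu}$ all reduce trivially once $\nu=1$. Hence the heart of the argument is the identity
$$
E_{1,r+1}^{r+1}(-\Lambda t)=\frac{e^{-\Lambda t}}{r!}.
$$

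To establish this, I would start from the series definition (\ref{GML}) with $\alpha=1$, $\beta=\gamma=r+1$, namely
$$
E_{1,r+1}^{r+1}(z)=\sum_{m=0}^{\infty}\frac{(r+1)_{m}\,z^{m}}{m!\,\Gamma(m+r+1)}.
$$
The key simplification uses $(r+1)_{m}=(r+m)!/r!$ together with $\Gamma(m+r+1)=(m+r)!$, so the two factorials cancel and the summand collapses to $z^{m}/(r!\,m!)$. Summing over $m$ then gives $e^{z}/r!$, and evaluating at $z=-\Lambda t$ yields the claimed expression. Note that a distinct summation index (here $m$) must be used to avoid clashing with the outer index $r$ inherited from (\ref{pmfPoissonFrac}).

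With this identity in hand, the remaining step is purely algebraic: substituting $E_{1,r+1}^{r+1}(-\Lambda t)=e^{-\Lambda t}/r!$ and $t^{r\nu}=t^{r}$ into (\ref{pmfPoissonFrac}), the factor $1/r!$ combines with the multinomial coefficient $\binom{r}{\alpha_{1},\ldots,\alpha_{k}}=r!/(\alpha_{1}!\cdots\alpha_{k}!)$ to leave exactly $1/(\alpha_{1}!\cdots\alpha_{k}!)$, reproducing (\ref{pmfPoissonFrac1}). I do not expect any genuine obstacle here; the only point requiring a little care is the Pochhammer--Gamma bookkeeping that produces the clean cancellation. One may also remark that the result is a consistency check: for $\nu=1$ the representation (\ref{CompoundProcess}) reduces to a classical compound Poisson process, and (\ref{pmfPoissonFrac1}) is precisely the corresponding compound Poisson probability mass function.
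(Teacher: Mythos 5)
Your argument is correct and is exactly the route the paper takes: the corollary is obtained by specializing (\ref{pmfPoissonFrac}) to $\nu=1$ via the series definition (\ref{GML}), where $(r+1)_{m}=(r+m)!/r!$ and $\Gamma(m+r+1)=(m+r)!$ give $E_{1,r+1}^{r+1}(z)=e^{z}/r!$, and the factor $1/r!$ cancels against the multinomial coefficient. Your write-up merely makes explicit the computation the paper leaves to the reader.
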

%
%--------------------------------------------------------------------
\begin{figure}[t]  %%%%%%%%%%% FIGURA  PROB
%\vspace{20mm}
\begin{center}
\epsfxsize=6.2cm
\epsfbox{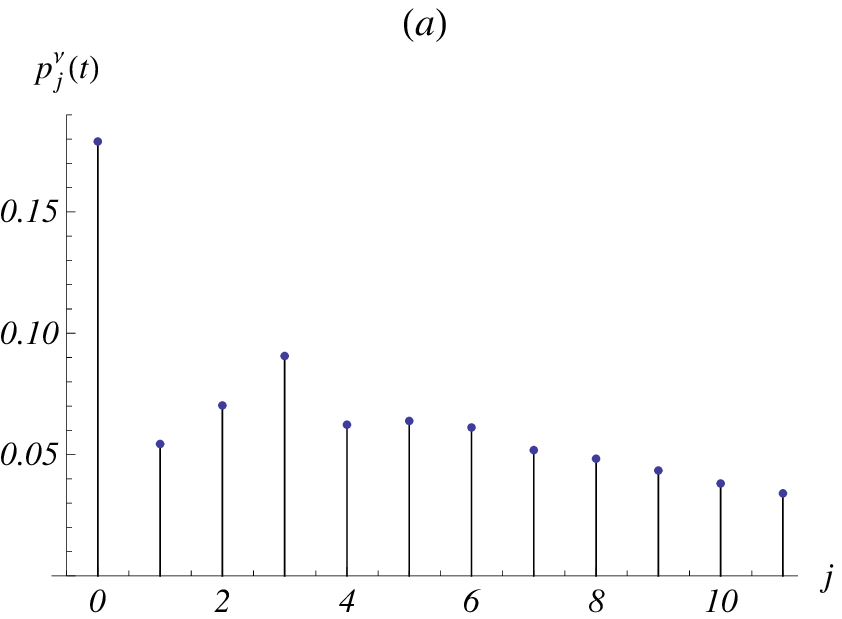}
$\;$
\epsfxsize=6.2cm
\epsfbox{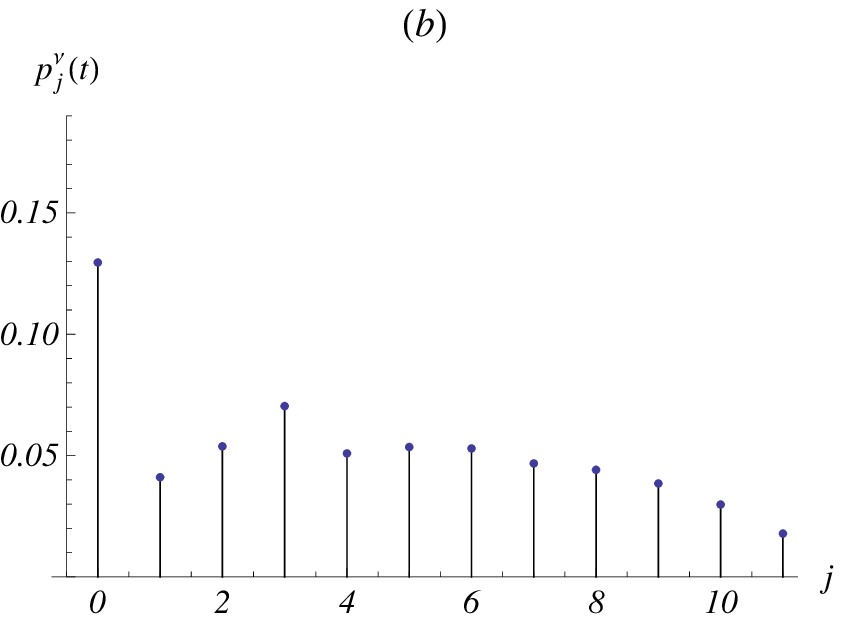}
\end{center}
\vspace{-0.2cm}
\caption{Probability distribution of  $M^{\nu}(t)$, given in (\ref{pmfPoissonFrac}),
for $j=0,1,\ldots, 11$, with $k=3$, $\nu=0.5$, $\lambda_1=\lambda_2=\lambda_3=1$, 
$(a)$ $t=1$ and $(b)$ $t=2$.
The displayed probability mass is $(a)$ 0.797292 and $(b)$ 0.629278.}
\label{fig:prob}
\end{figure}
%--------------------------------------------------------------------  
%--------------------------------------------------------------------
\begin{figure}[t]  %%%%%%%%%%% FIGURA  PROB t
%\vspace{20mm}
\begin{center}
\epsfxsize=6.2cm
\epsfbox{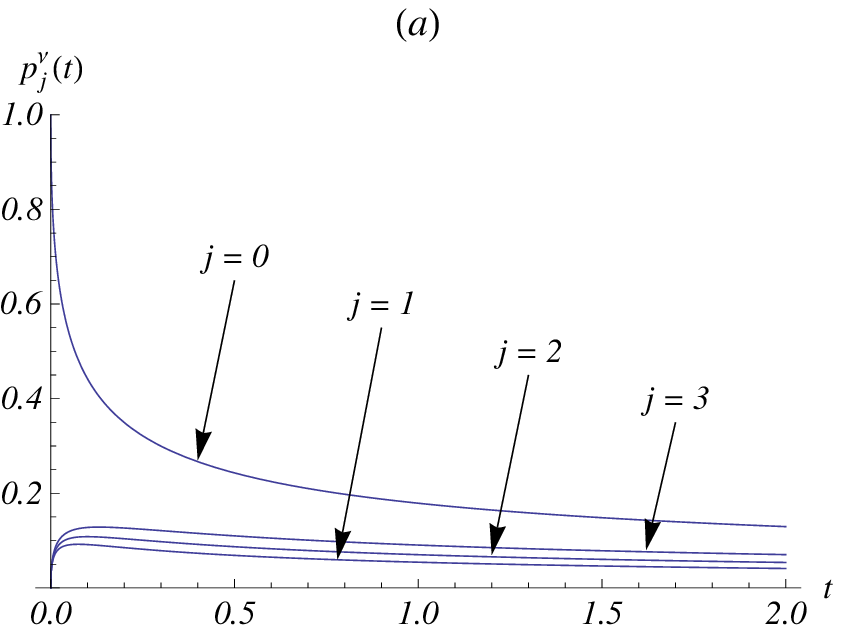}
$\;$
\epsfxsize=6.2cm
\epsfbox{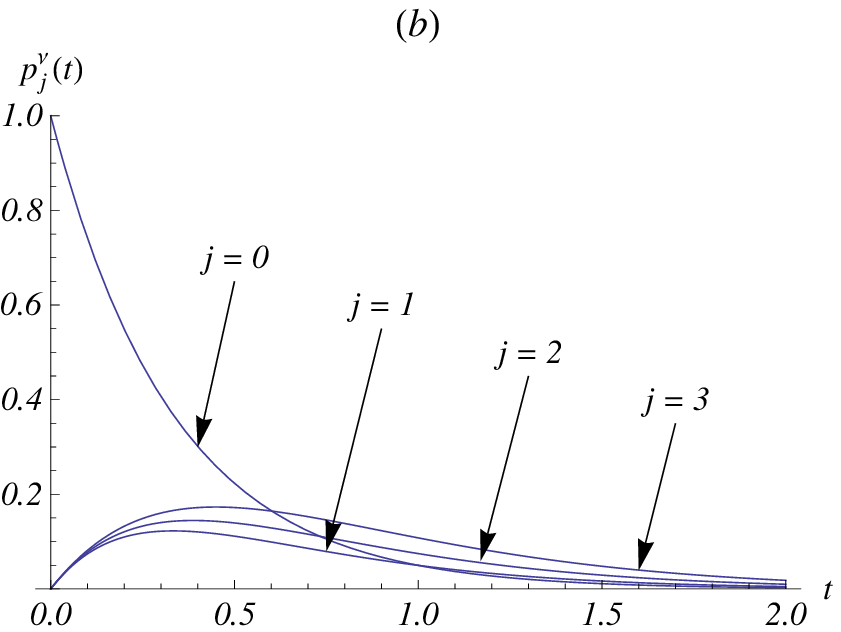}
\end{center}
\vspace{-0.2cm}
\caption{Probability distribution of  $M^{\nu}(t)$, given in (\ref{pmfPoissonFrac}),
for $0\leq t\leq 2$, with $k=3$, $\lambda_1=\lambda_2=\lambda_3=1$, 
$(a)$ $\nu=0.5$ and $(b)$ $\nu=1$.}
\label{fig:prob2}
\end{figure}
%--------------------------------------------------------------------  
%%%%%%%%%%%%%%%%%%%%
\subsection{Equivalent representation}
%%%%%%%%%%%%%%%%%%%%
We will now examine an interesting relationship between the process $ M^{\nu}(t) $ and the 
process $M^{1}(t)$. In fact, we  show that the following representation holds:
\begin{equation*}
M^{\nu}(t)\overset{d}{=}M^{1}\left (\text{\rsfsten T} _{2\nu }\left ( t \right )\right ),
\end{equation*}
where $ \text{\rsfsten T} _{2\nu }\left ( t \right ) $ is a suitable random process, and thus $ M^{\nu}(t) $ can be considered as a homogeneous Poisson-type counting process with jumps of sizes $1,2,\ldots,k$ stopped at a random time 
$ \text{\rsfsten T} _{2\nu }\left ( t \right ) $.
\par
Let us denote by $ g(z,t)=g_{\,2\nu }\left ( z,t \right ) $ the solution of the Cauchy problem 
\begin{equation}\label{Subordinator}
\begin{cases}
\frac{\partial^{\,2\nu } g(z,t) }{\partial t^{\,2\nu }}=\frac{\partial^{\,2} g(z,t)}{\partial z^{\,2}},&t>0,\;z\in\mathbb{R}\\
g \left ( z,0 \right )= \delta \left ( z \right ),&0< \nu < 1\\
\frac{\partial g\left ( z,t \right )}{\partial t}\bigg|_{t=0}= 0,&\frac{1}{2}< \nu < 1.
\end{cases}
\end{equation}
It is well-known that (see \cite{Mainardi1996} and \cite{MainardiAltro1996})
\begin{equation}\label{density}
g_{\,2\nu }\left ( z,t \right )=\frac{1}{2t^{\nu }}W_{-\nu ,1-\nu }\left ( -\frac{|z|}{t^{\nu }} \right ), \quad t>0,\;z\in\mathbb{R},
\end{equation}
where
\begin{equation}\label{WrightFunction}
W_{\alpha,\beta }\left ( x \right )= \sum_{k=0}^{\infty}\frac{x^{\,k}}{k!\,\Gamma \left (\alpha k+\beta   \right )},\qquad\alpha > -1,\,\beta > 0,\,x\in\mathbb{R}
\end{equation}
is the Wright function. Let 
\begin{equation}\label{FoldedSolution}
\bar{g}_{\,2\nu }\left ( z,t \right )=
\begin{cases}
2\,g_{\,2\nu }\left ( z,t \right ),  &z>0\\
0,  &z<0
\end{cases}
\end{equation}
be the folded solution to (\ref{Subordinator}) and let $ \text{\rsfsten T} _{2\nu }\left ( t \right ) $ be a random process (independent from the process $ M^{1}\left ( t \right ) $) whose transition density $ \mathbb{P}\left \{\text{\rsfsten T} _{2\nu }\left ( t \right ) \in dz \right \}/dz $ is given in %(\ref{density}) and 
(\ref{FoldedSolution}).
\begin{remark}\rm
It has been proved in \cite{Orsingher2004} that the solution $ g_{2\nu} $ to (\ref{Subordinator}) can be alternatively expressed as 
\begin{equation*}
g_{2\nu }\left ( z,t \right )= \frac{1}{2\Gamma \left ( 1-\nu  \right )}\int_{0}^{t}\left ( t-w \right )^{-\nu }f_{\nu }\left ( w,\left | z \right | \right )dw,\quad z\in \mathbb{R},
\end{equation*}
where $ f_{\nu}\left(\cdot, y\right) $ is a stable law $ S_{\nu}\left(\mu,\beta,\sigma\right) $ of order $ \nu $, with parameters $ \mu=0, \beta=1 $ and $ \sigma =\left(z\cos \frac{\pi\nu}{2}\right)^{\frac{1}{\nu}} $.
\end{remark}
\begin{proposition}\label{pr:indent}
The process $ M^{\nu}\left ( t \right ) $ and the process $ M^{1}\left (\text{\rsfsten T} _{2\nu }\left ( t \right )\right ) $ are identically distributed.
\end{proposition}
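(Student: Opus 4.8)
The plan is to establish the distributional identity $M^{\nu}(t)\overset{d}{=}M^{1}(\text{\rsfsten T}_{2\nu}(t))$ by showing that both processes share the same moment generating function, exploiting the explicit form already obtained in Proposition \ref{fgmfraz}. First I would compute the moment generating function of the subordinated process $M^{1}(\text{\rsfsten T}_{2\nu}(t))$ by conditioning on the random time. Since $\text{\rsfsten T}_{2\nu}(t)$ is independent of $M^{1}$, and since the $\nu=1$ instance of Proposition \ref{fgmfraz} gives (using $E_{1,1}(x)=e^{x}$)
\[
\mathbb{E}\big[e^{s M^{1}(w)}\big]=\exp\Big(\sum_{j=1}^{k}\lambda_{j}\,(e^{js}-1)\,w\Big),
\]
the tower property yields
\[
\mathbb{E}\big[e^{s M^{1}(\text{\rsfsten T}_{2\nu}(t))}\big]=\mathbb{E}\big[e^{\eta\,\text{\rsfsten T}_{2\nu}(t)}\big],\qquad \eta:=\sum_{j=1}^{k}\lambda_{j}\,(e^{js}-1).
\]
Thus everything reduces to evaluating the moment generating functional of the folded density $\bar g_{2\nu}$.

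Second, using the representation (\ref{density})--(\ref{FoldedSolution}) and the substitution $u=z/t^{\nu}$, I would write
\[
\mathbb{E}\big[e^{\eta\,\text{\rsfsten T}_{2\nu}(t)}\big]=\int_{0}^{\infty}e^{\eta z}\,\frac{1}{t^{\nu}}\,W_{-\nu,1-\nu}\Big(-\frac{z}{t^{\nu}}\Big)\,dz=\int_{0}^{\infty}e^{\eta t^{\nu}u}\,W_{-\nu,1-\nu}(-u)\,du.
\]
Expanding the exponential and integrating term by term, the whole computation hinges on the moment identity for the (M-)Wright function,
\[
\int_{0}^{\infty}u^{n}\,W_{-\nu,1-\nu}(-u)\,du=\frac{n!}{\Gamma(\nu n+1)},\qquad n\in\mathbb{N}_{0},
\]
which is a standard property of the function in (\ref{WrightFunction}). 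Substituting it gives
\[
\mathbb{E}\big[e^{\eta\,\text{\rsfsten T}_{2\nu}(t)}\big]=\sum_{n=0}^{\infty}\frac{(\eta t^{\nu})^{n}}{n!}\cdot\frac{n!}{\Gamma(\nu n+1)}=E_{\nu,1}\big(\eta t^{\nu}\big),
\]
which coincides exactly with the right-hand side of (\ref{fgmomM}). Since the moment generating function determines the law of an $\mathbb{N}_{0}$-valued process, the asserted identity follows at once.

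The main obstacle is the second step: justifying the termwise integration and, above all, establishing the moment formula for the Wright function $W_{-\nu,1-\nu}$. One must either invoke the known moment representation of the M-Wright (Mainardi) function or derive it, for instance by identifying $\bar g_{2\nu}(\cdot,t)$ with the density of the inverse $\nu$-stable subordinator, whose Laplace transform is known to be the Mittag-Leffler function. I would also keep track of the range of $s$, equivalently of $\eta$, for which the integral converges absolutely, and then extend the identity to all $s\in\mathbb{R}$ by analytic continuation, noting that both sides are entire functions of $\eta$.
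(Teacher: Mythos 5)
Your proof is correct but follows a genuinely different route from the paper's. The paper works directly at the level of the probability mass function: it writes $\mathbb{P}\{M^{1}(\text{\rsfsten T}_{2\nu}(t))=n\}=\int_{0}^{\infty}p_{n}^{1}(z)\,\bar{g}_{2\nu}(z,t)\,dz$, inserts the explicit compound-Poisson expression (\ref{pmfPoissonFrac1}) for $p_{n}^{1}(z)$, and then identifies the resulting integrals $\int_{0}^{\infty}e^{-\Lambda z}z^{j}W_{-\nu,1-\nu}(-z/t^{\nu})\,dz$ with the three-parameter Mittag-Leffler functions appearing in (\ref{pmfPoissonFrac}), via the Beghin--Orsingher integral representation of $E_{\nu,k\nu+1}^{k+1}$ in terms of the Wright function. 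You instead match moment generating functions, reducing everything to the single identity $\int_{0}^{\infty}u^{n}W_{-\nu,1-\nu}(-u)\,du=n!/\Gamma(\nu n+1)$, i.e.\ to the fact that the M-Wright density has $E_{\nu,1}$ as its moment generating transform. Your version is leaner: it bypasses the multinomial sums entirely and needs only the one-parameter Mittag-Leffler function, at the cost of having to justify finiteness of $\mathbb{E}[e^{\eta\,\text{\rsfsten T}_{2\nu}(t)}]$ for $\eta>0$ and the termwise integration --- both of which do go through, since for $0<\nu<1$ the M-Wright function decays like $\exp(-Bu^{1/(1-\nu)})$, faster than any exponential (alternatively, restricting to the probability generating function, $\eta\le 0$, already determines the law and makes every step trivially absolutely convergent, so the analytic continuation you mention is not really needed). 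The paper's version buys a term-by-term identification with the already-derived pmf (\ref{pmfPoissonFrac}), which is perhaps more informative but computationally heavier. The Wright moment formula you flag as the main obstacle is indeed standard (it is equivalent to $\bar{g}_{2\nu}(\cdot,t)$ being the density of the inverse $\nu$-stable subordinator, cf.\ Remark \ref{rm:new}), so there is no gap.
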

\begin{proof}[Proof]
From (\ref{FracPoisson2}) and (\ref{FoldedSolution}) we have 
$$
 \mathbb{P}\left\{ M^{1} \left (\text{\rsfsten T} _{2\nu }\left ( t \right )\right )= n \right \}
 =\int_{0}^{\infty} p_n^1 (z) \,\bar{g  }_{\,2\nu }\left ( z,t \right )dz.
$$
Hence, making use of (\ref{pmfPoissonFrac1}) and (\ref{WrightFunction}) we get
\begin{align*}
\mathbb{P}\left\{ M^{1} \left (\text{\rsfsten T} _{2\nu }\left ( t \right )\right )= n \right \}
 =& \sum_{j=0}^{n}\, 
\sum_{\substack{\alpha_{1}+\alpha_{2}+\ldots+\alpha_{k}=j\\\alpha_{1}+2\alpha_{2}+\ldots+k\alpha_{k} =n}}
\frac{\lambda _{1}^{\alpha_{1}}\lambda _{2}^{\alpha _{2}}\ldots\lambda _{k}^{\alpha _{k}}}
{\alpha_{1}!\,\alpha_{2}!\,\ldots\,\alpha_{k}!} \\  
&\times   \frac{1}{t^{\nu}}\,
 \int_{0}^{\infty}e^{-\Lambda z}\,z^{j}\,W_{-\nu ,1-\nu }\left ( -\frac{z}{t^{\nu}} \right )dz.
\end{align*}
For $y=\Lambda z$, the last expression identifies with (\ref{pmfPoissonFrac}) due to the following integral representation of the generalized Mittag-Leffler function in terms of the Wright function, proposed by \cite{BeghinOrsingher2010}:
\begin{equation*}
E_{\nu,k\nu +1 }^{k+1 }(-\Lambda t^{\nu })=\frac{1}{k!\,\Lambda^{k+1}\,t^{\left ( k+1 \right )\nu }}\int_{0}^{\infty}e^{-y}\,y^{\,k}\,W_{-\nu ,1-\nu }\left ( -\frac{y}{\Lambda t^{\nu}} \right )dy.
\end{equation*}
This completes the proof.
%\qedhere
%
\end{proof}
\begin{remark}\label{rm:new}\rm 
Since the transition density (\ref{FoldedSolution}) coincides with the 
probability density function of the standard inverse $\nu$-stable subordinator ${\tt E}^{\,\nu}(t)$
(see \cite{Meerschaert2011}),  the result given in Proposition \ref{pr:indent} can  
be stated also as follows: The process $ M^{\nu}\left ( t \right ) $ and the process 
$ M^{1}\left ({\tt E}^{\,\nu}(t)\right ) $ are identically distributed. 
\end{remark}
\begin{remark}\label{rm:FracPoisson}\rm
In \cite{BeghinOrsingher2010} Beghin and Orsingher proved an analogous subordination relationship, i.e.
\begin{equation*}
 N_{\,\lambda}^{\,\nu }(t)\overset{d}{=}N_{\,\lambda}^{\,1 }(\text{\rsfsten T} _{2\nu }\left ( t \right )),
\end{equation*}
where $  N_{\,\lambda}^{\,\nu }(t) $ is the fractional Poisson process defined in (\ref{PoissonFractional}) and $ \text{\rsfsten T} _{2\nu }\left ( t \right ) $ is the random time defined above.
\end{remark}
\begin{remark}\rm
By taking $ \nu=\frac{1}{2} $, from Proposition \ref{pr:indent} we have that  $ M^{1/2}\left ( t \right ) $ and  
$ M^{1}\left (\text{\rsfsten T} _{1}\left ( t \right )\right ) $ are identically distributed.
We note that the random time $ \text{\rsfsten T} _{1}\left ( t \right )$, $t>0 $, becomes a reflecting Brownian motion. Indeed, in this case equation (\ref{Subordinator}) reduces to the heat equation 
\begin{equation*}
\begin{cases}
\frac{\partial g }{\partial t}=\frac{\partial^{\,2} g}{\partial z^{\,2}},&t>0,\;z\in\mathbb{R}\\
g \left ( z,0 \right )= \delta \left ( z \right ),
\end{cases}
\end{equation*}
and the solution $ g_{1}\left(z,t\right) $ is the density of a Brownian motion $ B\left(t\right),t>0 $, with infinitesimal variance 2. After folding up the solution, we find the following probability mass 
\begin{equation*}
\begin{split}
\mathbb{P}\left \{ M^1\left (\text{\rsfsten T} _{1}\left ( t \right )\right )=n \right \}
&=\int_{0}^{\infty}p_n^1(z)
\,\frac{e^{-\frac{z^{2}}{4t}} }{\sqrt{\pi} t}dz
\\
&=\mathbb{P}\left \{ M^1\left(\left | B\left ( t \right ) \right |\right)= n \right \},
\end{split}
\end{equation*}
so that $M^{1/2}\left(t\right) $ is a jump process at a Brownian time. 
\end{remark}
\begin{remark}\rm
It is worth noticing that both the compositions of the fractional Poisson process  $ N_{\,\lambda}^{\,\nu }(t) $ defined in (\ref{PoissonFractional}) and of the fractional process $ M^{\nu}(t) $ defined in (\ref{FracPoisson2}) with the random time $ \text{\rsfsten T} _{2\nu }\left ( t \right ) $ yields again fractional processes of different order, i.e.
\begin{equation*}
N_{\,\lambda}^{\,\nu }(\text{\rsfsten T} _{2\nu }\left ( t \right ))\overset{d}{=}N_{\,\lambda}^{\,\nu^{2} }(t)
\qquad 
\hbox{and}
\qquad
 M^{\nu}(\text{\rsfsten T} _{2\nu }\left ( t \right ))\overset{d}{=}M^{\nu^{2}}(t).
\end{equation*}
Taking into account the subordinating relationships examined in Proposition \ref{pr:indent} and in 
Remark \ref{rm:FracPoisson}, this fact follows immediately from Remark 3.1 of \cite{Kumar2011}, 
since, in general, the composition of two stable subordinators of indexes $ \beta_{1} $ and $ \beta_{2} $ 
respectively is a stable subordinator of index $ \beta_{1}\beta_{2} $.
\end{remark}
\begin{remark}\rm
\-\hspace{8pt}Bearing in mind Proposition \ref{CFPRemark},  setting 
$$
 {\mathcal S}_r=\Lambda \cdot \mathbb{E}[X^{r}]=\sum_{j=1}^k j^r\, \lambda_j, \qquad r=1,2
$$
and recalling (\ref{MeanVarPoissonFrac}), we can compute more effortlessly 
the mean and the variance of the process. In fact, by Wald's equation we have
\begin{align*}
\mathbb{E}\left [ M^{\nu}(t) \right ]& = \mathbb{E}[X]\cdot \mathbb{E}\left [ N_{\,\Lambda}^{\,\nu } \left ( t \right )\right ]\notag\\
&=\frac{{\mathcal S}_1\,t^{\nu}}{\Gamma \left ( \nu+1 \right )},\qquad t\geq 0.
\end{align*} 
Moreover, by the law of total variance we get   
\begin{align*}
\mathrm{Var}\left [ M^{\nu}(t) \right ]&= \mathrm{Var}\left [ X \right ]\cdot \mathbb{E}\left [ N_{\,\Lambda }^{\,\nu} (t)\right ]
+\left ( \mathbb{E}\left [ X \right ] \right )^{2}\cdot \mathrm{Var}\left [  N_{\,\Lambda }^{\,\nu} (t)\right ]
\notag\\
&=\frac{{\mathcal S}_2\,t^{\nu}}{\Gamma ( \nu+1)}
+ {\mathcal S}_{1}^2 \,t^{2\nu}\,Z(\nu), \qquad t\geq 0,
\notag
\end{align*}
where 
\begin{equation*}
Z(\nu):=\frac{1}{\nu }\left ( \frac{1}{\Gamma \left ( 2\nu \right )}-\frac{1}{\nu \Gamma ^{2}(\nu)} \right ).
\end{equation*}
As a consequence it is not hard to show that $ \mathrm{Var}\left [ M^{\nu}(t) \right ]-\mathbb{E}\left [ M^{\nu}(t) \right ]>0 $, or, equivalently, that the process $ M^{\nu}(t) $ exhibits overdispersion, since $ Z(\nu)>0 $ for all $ \nu\in (0,1) $ and $ Z(1)=0 $. Finally, we point out that a formal expression for the moments of process $ M^{\nu}(t)$ 
is provided in Lemma \ref{lemma}. 
\end{remark}
%
%%%%%%%%%%%%%%%%%%%%%%%%%%%%%%
\section{Waiting times and first-passage times}\label{Section4}
%%%%%%%%%%%%%%%%%%%%%%%%%%%%%%
We evaluate the probability distribution function of the waiting time until the first occurrence of a jump of size $ i,\,i=1,2,\ldots, k $, for the process $ M^{\nu}(t) $. We first observe that the following decomposition holds:
$$
 M^{\nu}(t)=\sum_{j=1}^k j\,M^{\nu}_{j}(t),\qquad t\geq 0,
$$
where
\begin{equation}
M^{\nu}_{j}(t):=\sum_{i= 1}^{N_{\, \Lambda}^{\, \nu }\left ( t \right )} {\bf 1}_{\left\{X_{i}=j\right\}},
\qquad j=1,2,\ldots, k,
\end{equation}
and thus $ M^{\nu}_{j}(t) $ counts the number of jumps of amplitude $j $ performed by $ M^{\nu}(t) $ in $ (0,t] $. Furthermore, we introduce the random variables 
\begin{equation*}
H_{j}:=\inf \left \{ s>0:M_{j}^{\nu}(s)=1 \right \}\quad\mathrm{and}\quad G^{j}\sim \mathrm{Geo}\left ( \frac{\lambda _{j}}{\Lambda} \right ),\qquad j=1,2,\ldots, k.
\end{equation*}
In other words, $ H_{j} $ represents the first occurrence time of a jump of amplitude $j$ for process $ M^{\nu}(t) $,
whereas $ G^{j} $ is a geometric random variable with parameter $ \frac{\lambda _{j}}{\Lambda} $ 
that describes the order of the first jump of amplitude $j$ in the sequence of jumps of $M^{\nu}(t)$. 
We prove that $ H_{j} $ is distributed as the waiting time of the first event of the fractional Poisson process defined in (\ref{PoissonFractional}) with parameter $\lambda_j$. Indeed, the following result holds.
\begin{theorem}\label{th:WaitingTime}
Let $ j\in \left \{ 1,2,\ldots,k \right \} $. Then
\begin{equation}
\mathbb{P}\left \{ H_{j}\leq t \right \}=\lambda_{j}t^{\nu}E_{\,\nu ,\nu +1}\left ( -\lambda _{j}t^{\nu} \right ),\qquad t>0.
\end{equation}
\end{theorem}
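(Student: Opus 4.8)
The plan is to exploit the compound representation (\ref{CompoundProcess}) of $M^{\nu}(t)$ and to read off $H_{j}$ directly from the jump mechanism, precisely as the setup with $G^{j}$ suggests. Write $T_{1}<T_{2}<\cdots$ for the arrival epochs of the governing fractional Poisson process $N_{\,\Lambda}^{\,\nu}(t)$, so that $T_{n}$ is the waiting time of its $n$-th event, with distribution function $F_{n}^{\nu}$ given by (\ref{PDFTempoArrivo}) at intensity $\Lambda$. The process $M^{\nu}$ performs its $i$-th jump at time $T_{i}$ with amplitude $X_{i}$, where the $X_{i}$ are i.i.d.\ with law (\ref{Jumps}) and independent of $N_{\,\Lambda}^{\,\nu}$. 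Hence the first jump of amplitude $j$ occurs at $H_{j}=T_{G^{j}}$, where $G^{j}=\inf\{i\ge 1:X_{i}=j\}$ is geometric with parameter $p_{j}:=\lambda_{j}/\Lambda$. The decisive structural point is that $G^{j}$ depends only on the marks $(X_{i})$ and is therefore \emph{independent} of the interarrival times that determine the $T_{n}$; this is exactly what lets the classical thinning argument survive in the fractional regime, even though $M^{\nu}_{j}$ is itself not a fractional Poisson process (its interarrival times are not exponential).

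Conditioning on $G^{j}$ I would then write
\[
 \mathbb{P}\{H_{j}\le t\}=\sum_{n=1}^{\infty}p_{j}(1-p_{j})^{n-1}\,F_{n}^{\nu}(t),
\]
and evaluate this series in the Laplace domain. Since $\text{\rsfsten L}\{f_{n}^{\nu}(t);s\}=\Lambda^{n}/(s^{\nu}+\Lambda)^{n}$, one has $\text{\rsfsten L}\{F_{n}^{\nu}(t);s\}=\Lambda^{n}/[\,s(s^{\nu}+\Lambda)^{n}\,]$, so the transform of the right-hand side (the interchange of sum and transform being justified by Tonelli, all terms being nonnegative) becomes a geometric series in $\Lambda/(s^{\nu}+\Lambda)$ summing to
\[
 \frac{p_{j}\Lambda}{s\,(s^{\nu}+\Lambda)}\cdot\frac{1}{1-(1-p_{j})\Lambda/(s^{\nu}+\Lambda)}=\frac{\lambda_{j}}{s\,(s^{\nu}+\lambda_{j})},
\]
where I used $p_{j}\Lambda=\lambda_{j}$ and $\Lambda-(1-p_{j})\Lambda=\lambda_{j}$ to collapse the denominator. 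This is precisely $\text{\rsfsten L}\{F_{1}^{\nu}(t);s\}$ for a fractional Poisson process of parameter $\lambda_{j}$, so inverting it — by (\ref{InverseLaplace}) with $\beta=\nu,\ \gamma=\nu+1,\ \delta=1,\ \omega=-\lambda_{j}$, equivalently by reading (\ref{PDFTempoArrivo}) at $k=1$, $\lambda=\lambda_{j}$ — yields $\mathbb{P}\{H_{j}\le t\}=\lambda_{j}t^{\nu}E_{\nu,\nu+1}(-\lambda_{j}t^{\nu})$, which is the claim.

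I expect the only genuinely delicate step to be the identification $H_{j}=T_{G^{j}}$ together with the independence of $G^{j}$ from the arrival epochs; once that is established, the remainder is a routine geometric-series manipulation. As an independent cross-check that even sidesteps the infinite sum, note that $\{H_{j}>t\}=\{M^{\nu}_{j}(t)=0\}$, and that $M^{\nu}_{j}(t)=\sum_{i=1}^{N_{\,\Lambda}^{\,\nu}(t)}\mathbf{1}_{\{X_{i}=j\}}$ is a compound fractional Poisson process with Bernoulli$(p_{j})$ summands. Substituting $e^{s}\mapsto (1-p_{j})+p_{j}z$ in (\ref{MgfPF}) gives the generating function $\mathbb{E}[z^{M^{\nu}_{j}(t)}]=E_{\nu,1}(\lambda_{j}(z-1)t^{\nu})$, whence at $z=0$ we get $\mathbb{P}\{M^{\nu}_{j}(t)=0\}=E_{\nu,1}(-\lambda_{j}t^{\nu})$; the elementary Mittag-Leffler identity $1-E_{\nu,1}(-x)=x\,E_{\nu,\nu+1}(-x)$ (a shift of the summation index) then reproduces the same distribution function for $H_{j}$.
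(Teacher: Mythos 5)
Your proposal is correct, and it starts exactly where the paper does: you condition on the geometric order statistic $G^{j}$ and arrive at the same series $\sum_{n\geq 1}p_{j}(1-p_{j})^{n-1}F_{n}^{\nu}(t)$ with $p_{j}=\lambda_{j}/\Lambda$. Where you diverge is in how that series is summed. The paper stays in the time domain: it substitutes the explicit form $F_{n}^{\nu}(t)=\Lambda^{n}t^{n\nu}E_{\nu,n\nu+1}^{n}(-\Lambda t^{\nu})$ and then works through three special-function identities (an integral representation of $E_{\beta,\gamma}^{\delta}$ from Mathai--Haubold, the summation formula (2.30) of Beghin--Orsingher, and a second Mathai--Haubold integral) to collapse the sum. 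You instead pass to the Laplace domain, where $\text{L}\{F_{n}^{\nu};s\}=\Lambda^{n}/[s(s^{\nu}+\Lambda)^{n}]$ turns the whole series into an elementary geometric sum equal to $\lambda_{j}/[s(s^{\nu}+\lambda_{j})]$, and you invert via Prabhakar's formula (\ref{InverseLaplace}). Your computation checks out (including the convergence of the geometric series for $s>0$ and the Tonelli justification for interchanging sum and transform), and it is appreciably shorter and less dependent on special-function machinery; the paper's route has the modest advantage of never leaving the time domain and of exhibiting the intermediate Mittag-Leffler manipulations explicitly. Your cross-check is a genuinely different second proof: thinning $N_{\Lambda}^{\nu}$ by Bernoulli$(p_{j})$ marks, reading off $\mathbb{P}\{M_{j}^{\nu}(t)=0\}=E_{\nu,1}(-\lambda_{j}t^{\nu})$ from the probability generating function, and applying the identity $1-E_{\nu,1}(-x)=xE_{\nu,\nu+1}(-x)$. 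That argument avoids the infinite sum entirely and is arguably the cleanest of the three; it is not in the paper.
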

\begin{proof}[Proof]
By conditioning on $ G^{j} $, for $ t>0 $, due to Eqs.\ (\ref{CompoundProcess}) and (\ref{PDFTempoArrivo}) we have
\begin{equation*}
\begin{split}
\mathbb{P}\left \{ H_{j}\leq t \right \}&
=\mathbb{E}_{\,G^{j}}\left [ \mathbb{P}\left \{ H_{j}\leq t \mid G^{j}\right \} \right ]\\
&=\sum_{n=1}^{+\infty}\mathbb{P}\left \{ H_{j}\leq t\mid G^{j}= n \right \}\mathbb{P}\left \{  G^{j}= n\right \}\\
&=\sum_{n=1}^{+\infty}F_{\,n}^{\,\nu}(t)\, \frac{\lambda _{j}}{\Lambda}\, \left (1- \frac{\lambda _{j}}{\Lambda}\right)^{n-1}\\
&=\sum_{n=1}^{+\infty} \Lambda^{n} t^{n\nu}E_{\nu ,n\nu +1 }^{n}(-  \Lambda  t^{\nu })\,
\frac{\lambda _{j}}{\Lambda}\, \left (1- \frac{\lambda _{j}}{\Lambda}\right)^{n-1}\\
&=\lambda _{j} t^{\nu}\sum_{n=0}^{+\infty} \Lambda^{n} t^{n\nu} \left (1- \frac{\lambda _{j}}{\Lambda}\right)^{n}
E_{\nu ,(n+1)\nu +1 }^{n+1}(-  \Lambda  t^{\nu }).
\end{split}
\end{equation*}
By using formula (2.3.1) of \cite{Mathai2008}, i.e.
\begin{equation*}
\frac{1}{\Gamma \left ( \alpha  \right )}\int_{0}^{1}u^{\gamma -1}\left ( 1-u \right )^{\alpha -1}E_{\, \beta ,\gamma }^{\, \delta }\left ( zu^{\beta } \right )du= E_{\, \beta ,\gamma+\alpha }^{\, \delta }\left ( z \right ),
\end{equation*}
(where $ Re(\alpha)>0,Re(\beta)>0$ and $ Re(\gamma)>0 $) for $ \alpha=\beta=\nu$, 
$\gamma=n\nu+1$, $\delta=n+1$ and $z=-\Lambda t^{\nu} $, we get
\begin{align*}
 \mathbb{P}\left \{ H_{j}\leq t \right \}
 =&\, \frac{\lambda _{j} t^{\nu}}{\Gamma(\nu)}
 \sum_{n=0}^{+\infty} \Lambda^{n} t^{n\nu} \left (1- \frac{\lambda _{j}}{\Lambda}\right)^{n}  
 \int_{0}^{1}u^{n\nu}\left ( 1-u \right )^{\nu -1}E_{\,\nu,n\nu+1}^{\,n+1}\left ( - \Lambda\, t^{\nu}u^{\nu } \right )du
 \\
=&\, \frac{\lambda _{j} t^{\nu}}{\Gamma(\nu)}\int_{0}^{1}\left ( 1-u \right )^{\nu -1}
 \sum_{n=0}^{+\infty} \left[\Lambda t^{\nu}\left (1- \frac{\lambda _{j}}{\Lambda}\right)u^{\nu}\right]^n
E_{\,\nu,n\nu+1}^{\,n+1}\left ( - \Lambda\, t^{\nu}u^{\nu } \right )du.
\end{align*}
Due to formula (2.30) of \cite{BeghinOrsingher2010}, i.e.
$$
\sum_{n=0}^{+\infty}\left ( \lambda wt^{\nu } \right )^{n}E_{\,\nu,\nu n+1}^{\,n+1}\left ( -\lambda t^{\nu } \right )
= E_{\,\nu,1}\left ( \lambda \left ( w-1 \right )t^{\nu } \right ) 
\qquad (\left | w \right |\leq 1, \;\; t>0), 
$$
we have
$$
\mathbb{P}\left \{ H_{j}\leq t \right \}=\frac{\lambda_{j}t^{\nu}}{\Gamma \left ( \nu  \right )}
\int_{0}^{1}\left ( 1-u \right )^{\nu -1}E_{\,\nu ,1}\left ( -\lambda_j \, t^{\nu }u^{\nu }\right )du\\
$$
By making use of formula (2.2.14) of \cite{Mathai2008}, i.e.
\begin{equation*}
\int_{0}^{1}z^{\beta -1}\left ( 1-z \right )^{\sigma -1}E_{\, \alpha ,\beta }\left ( xz^{\alpha }\right )dz=\Gamma \left ( \sigma  \right )E_{\, \alpha ,\sigma +\beta }\left ( x \right ),
\end{equation*}
(where $\alpha>0;\,\beta,\sigma\in\mathbb{C};\,Re(\beta)>0 $ and $ Re(\sigma)>0 $), 
for $ \sigma=\alpha=\nu,\,\beta=1$ and $x=-\lambda_{j}t^{\nu} $, we get
$$
\mathbb{P}\left \{ H_{j}\leq t \right \}
=\lambda_{j}t^{\nu}E_{\,\nu ,\nu +1}\left ( -\lambda _{j}t^{\nu} \right ), \qquad t\geq 0.
$$
Therefore $ H_{j} $ is distributed as the waiting time of the first event of the fractional Poisson process defined in (\ref{PoissonFractional}) (cf.\ (\ref{PDFTempoArrivo})).\qedhere
\end{proof}
\par
The result shown in Theorem \ref{th:WaitingTime} is an immediate extension of the well-known result   
for the Poisson process, i.e.\ for $\nu=1$, by which $H_{j}$ is exponentially distributed with parameter 
$\lambda _{j}$.\\
\par
We will now be concerned with the distribution of the first passage time to a fixed level for the process $ M^{\nu}(t)$, denoted as 
\begin{equation}\label{DefFPT}
 \tau_{n}= \inf \left \{  s>0:M^{\nu }\left ( s \right )= n \right \}, \qquad n \in\mathbb{N}.
\end{equation}
The following result is concerning the case $k=2$, i.e.\ when the process $ M^{\nu}(t) $ performs jumps of sizes 1 and 2.
\begin{theorem}
The cumulative distribution function of the first passage time $ \tau_{k} $ when $k=2$ reads
\begin{align}\label{FPT}
\mathbb{P}\left \{ \tau_{n}\leq t \right \}=&\sum_{h=n}^{+\infty}\sum_{j=\lceil\frac{h}{2}\rceil}^{h}\sum_{i=1}^{j}\binom{i}{n-i}\binom{j-i}{h-n-j+i}\left(\frac{\lambda _{1}}{\lambda _{1}+\lambda _{2}} \right )^{2j-h}\left ( \frac{\lambda _{2}}{\lambda _{1}+\lambda _{2}} \right )^{h-j}\notag\\&\times\int_{0}^{t}\mathbb{P}\left \{ N_{\,\lambda _{1}+\lambda _{2}}^{\,\nu}\left ( t \right )= j,N_{\,\lambda _{1}+\lambda _{2}}^{\,\nu}\left ( s\right )=i \right\}ds, \qquad t> 0.
\end{align}
\end{theorem}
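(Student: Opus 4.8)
The plan is to build everything on the compound representation (\ref{CompoundProcess}), writing $M^{\nu}(t)=\sum_{i=1}^{N_{\,\Lambda}^{\,\nu}(t)}X_i$ with the $X_i\in\{1,2\}$ distributed as in (\ref{Jumps}) and independent of $N_{\,\Lambda}^{\,\nu}$. Since each $X_i\ge 1$, the partial sums $S_m=X_1+\dots+X_m$ are strictly increasing, so the walk can equal $n$ at most one (random) index $i$; a size-$2$ jump may overshoot $n$, so by (\ref{DefFPT}) the event $\{\tau_n\le t\}$ is precisely the event that some $S_i=n$ with the corresponding arrival time $T_i\le t$. The first computation I would carry out is the elementary weight of the walk: conditioning on having performed $r$ jumps, the event $S_r=m$ forces exactly $m-r$ jumps of size $2$ and $2r-m$ of size $1$, whence $\mathbb{P}\{S_r=m\}=\binom{r}{m-r}(\lambda_1/\Lambda)^{2r-m}(\lambda_2/\Lambda)^{m-r}$. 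This is the source of the binomial coefficients and the power weights in (\ref{FPT}).

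Next I would condition on the terminal configuration $M^{\nu}(t)=h$ and $N_{\,\Lambda}^{\,\nu}(t)=j$, and split the $j$ jumps into those up to the hitting index $i$ and those after it. By the independence of the increments of the random walk $S_m$ this factorises as $\mathbb{P}\{S_i=n,\,S_j=h\}=\binom{i}{n-i}\binom{j-i}{h-n-j+i}(\lambda_1/\Lambda)^{2j-h}(\lambda_2/\Lambda)^{h-j}$, which is exactly the combinatorial kernel of (\ref{FPT}); a short check shows the total number of size-$1$ jumps is $2j-h$ and of size-$2$ jumps is $h-j$, consistent with the exponents. Tracking the admissible ranges then reproduces the summation limits: $h\ge n$ because $M^{\nu}$ is nondecreasing, $\lceil h/2\rceil\le j\le h$ so that $h$ is reachable in $j$ steps, and the vanishing of $\binom{i}{n-i}$ outside $\lceil n/2\rceil\le i\le n$.

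The remaining and most delicate step is the timing constraint $T_i\le t$. By independence of the marks from the counting process one reaches quickly the compact form $\mathbb{P}\{\tau_n\le t\}=\sum_i \mathbb{P}\{S_i=n\}\,\mathbb{P}\{T_i\le t\}$, and the whole difficulty is to recast the arrival-time factor into the two-time integral appearing in (\ref{FPT}). Here I would parametrise by the epoch $s$ at which the hitting jump occurs, i.e.\ at which $N_{\,\Lambda}^{\,\nu}$ passes to the value $i$ while $M^{\nu}$ attains $n$, and integrate over $s\in(0,t)$ the joint law of the fractional Poisson process at the hitting epoch $s$ and at the terminal epoch $t$, producing $\int_{0}^{t}\mathbb{P}\{N^{\,\nu}_{\lambda_1+\lambda_2}(t)=j,\,N^{\,\nu}_{\lambda_1+\lambda_2}(s)=i\}\,ds$; collecting the three sums over $h,j,i$ then assembles the right-hand side. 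I expect this to be the main obstacle, and the point I would verify with the greatest care, precisely because $N_{\,\Lambda}^{\,\nu}$ has neither independent nor stationary increments: the behaviour after the hitting epoch cannot be detached from the law of the hitting epoch itself, so the post-hitting jumps must be kept coupled to the hitting time through the two-time distribution rather than through a renewal-type density factorisation that would be legitimate only in the classical case $\nu=1$.
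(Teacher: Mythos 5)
Your proposal follows essentially the same route as the paper's proof: the compound representation of Proposition \ref{CFPRemark}, conditioning on $N^{\,\nu}_{\lambda_1+\lambda_2}(s)=i$ and $N^{\,\nu}_{\lambda_1+\lambda_2}(t)=j$, splitting the marks $X_1,\dots,X_j$ into those before and after the hitting index, and the same binomial weights and summation ranges. The timing step you single out as the delicate point is precisely the identity the paper takes as its unjustified starting display, namely $\mathbb{P}\{\tau_n\le t\}=\sum_{h\ge n}\int_0^t\mathbb{P}\{M^{\nu}(t)=h,\,M^{\nu}(s)=n\}\,ds$, so your account matches the published argument in both substance and level of detail.
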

\begin{proof}[Proof]
Since the process $ M^{\nu}(t) $ performs jumps of size 1 and 2, and has non-independent increments, the computation of the cumulative distribution function of the first passage time (\ref{DefFPT}) can be carried out as follows:
\begin{align*}
\mathbb{P}\left \{ \tau_{n}\leq t \right \}=&\sum_{h=n}^{+\infty}\int_{0}^{t}\mathbb{P}\left \{ M^{\nu }\left ( t \right )= h,M^{\nu } \left ( s \right )= n\right \}ds\notag\\
=&\sum_{h=n}^{+\infty}\sum_{j=\lceil\frac{h}{2}\rceil}^{h}\sum_{i=1}^{j}\int_{0}^{t}\mathbb{P}\left \{ M^{\nu }\left ( t \right )= h,M^{\nu } \left ( s \right )= n\mid N_{\,\lambda _{1}+\lambda _{2}}^{\,\nu}\left ( t \right )= j,N_{\,\lambda _{1}+\lambda _{2}}^{\,\nu}\left ( s\right )=i\right \}\notag\\&\times\mathbb{P}\left \{ N_{\,\lambda _{1}+\lambda _{2}}^{\,\nu}\left ( t \right )= j,N_{\,\lambda _{1}+\lambda _{2}}^{\,\nu}\left ( s\right )=i \right \}ds\notag.
\end{align*}
Making use of Proposition \ref{CFPRemark} we have:
\begin{align*}
\mathbb{P}\left \{ \tau_{n}\leq t \right \}=&\sum_{h=n}^{+\infty}\sum_{j=\lceil\frac{h}{2}\rceil}^{h}\sum_{i=1}^{j}\int_{0}^{t}\mathbb{P}\left \{ \sum_{r=1}^{j}X_{r}=h,\sum_{l=1}^{i}X_{l}=n \right \}\notag\\&\times\mathbb{P}\left \{ N_{\,\lambda _{1}+\lambda _{2}}^{\,\nu}\left ( t \right )= j,N_{\,\lambda _{1}+\lambda _{2}}^{\,\nu}\left ( s\right )=i \right \}ds\notag\\
=&\sum_{h=n}^{+\infty}\sum_{j=\lceil\frac{h}{2}\rceil}^{h}\sum_{i=1}^{j}\int_{0}^{t}\mathbb{P}\left \{ \sum_{l=1}^{i}X_{l}=n,\sum_{r=i+1}^{j}X_{r}=h-n \right \}\notag\\&\times\mathbb{P}\left \{ N_{\,\lambda _{1}+\lambda _{2}}^{\,\nu}\left ( t \right )= j,N_{\,\lambda _{1}+\lambda _{2}}^{\,\nu}\left ( s\right )=i \right \}ds\notag\\
=&\sum_{h=n}^{+\infty}\sum_{j=\lceil\frac{h}{2}\rceil}^{h}\sum_{i=1}^{j}\int_{0}^{t}\mathbb{P}\left \{ \sum_{l=1}^{i}X_{l}=n\right \}\mathbb{P}\left \{\sum_{r=i+1}^{j}X_{r}=h-n  \right \}\notag\\&\times\mathbb{P}\left \{ N_{\,\lambda _{1}+\lambda _{2}}^{\,\nu}\left ( t \right )= j,N_{\,\lambda _{1}+\lambda _{2}}^{\,\nu}\left ( s\right )=i \right\}ds\notag\\
=&\sum_{h=n}^{+\infty}\sum_{j=\lceil\frac{h}{2}\rceil}^{h}\sum_{i=1}^{j}\binom{i}{n-i}\left ( \frac{\lambda _{1}}{\lambda _{1}+\lambda _{2}} \right )^{2i-n}\left ( \frac{\lambda _{2}}{\lambda _{1}+\lambda _{2}} \right )^{n-i}\notag\\&\times\binom{j-i}{h-n-j+i}\left(\frac{\lambda _{1}}{\lambda _{1}+\lambda _{2}} \right )^{2j-2i+n-h}\left ( \frac{\lambda _{2}}{\lambda _{1}+\lambda _{2}} \right )^{h-n-j+i}\notag\\&\times\int_{0}^{t}\mathbb{P}\left \{ N_{\,\lambda _{1}+\lambda _{2}}^{\,\nu}\left ( t \right )= j,N_{\,\lambda _{1}+\lambda _{2}}^{\,\nu}\left ( s\right )=i \right\}ds,\notag
\end{align*}
%\begin{align*}
%=&\sum_{h=n}^{+\infty}\sum_{j=\lceil\frac{h}{2}\rceil}^{h}\sum_{i=1}^{j}\binom{i}{n-i}\binom{j-i}{h-n-j+i}\left(\frac{\lambda _{1}}{\lambda _{1}+\lambda _{2}} \right )^{2j-h}\left ( \frac{\lambda _{2}}{\lambda _{1}+\lambda _{2}} \right )^{h-j}\notag\\&\times\int_{0}^{t}\mathbb{P}\left \{ N_{\,\lambda _{1}+\lambda _{2}}^{\,\nu}\left ( t \right )= j,N_{\,\lambda _{1}+\lambda _{2}}^{\,\nu}\left ( s\right )=i \right\}ds,\notag\\
%\end{align*}
%
this giving Eq. (\ref{FPT}).\qedhere
\end{proof}
\-\hspace{8pt}To the best of our knowledge, the bivariate distribution shown in the right-hand-side of (\ref{FPT}), i.e.\ 
$ \mathbb{P}\left \{ N_{\,\lambda _{1}+\lambda _{2}}^{\,\nu}\left ( s\right )=i,N_{\,\lambda _{1}+\lambda _{2}}^{\,\nu}\left ( t \right )= j \right\} $,  cannot be expressed in a closed form. 
\cite{OrsingherPolito2013} derived an expression in terms of Prabhakar integrals, i.e.:
\begin{multline*}
\mathbb{P}\left \{ N_{\,\lambda _{1}+\lambda _{2}}^{\,\nu}\left ( s\right )=i,N_{\,\lambda _{1}+\lambda _{2}}^{\,\nu}\left ( t \right )= j \right\}\\=\left ( \lambda _{1}+\lambda _{2} \right )^{j}\biggl ( \mathbf{E}_{\nu ,\nu i,-(\lambda _{1}+\lambda _{2});\left ( t-s \right )+}^{i} \biggl( \mathbf{E} _{\nu ,\nu(j-i-1)+1,-(\lambda _{1}+\lambda _{2});(z+s-t)+}^{j-i}\\\times y^{\nu -1}E_{\nu ,\nu }(-\left ( \lambda _{1} +\lambda _{2}\right )y^{\nu })\biggr )(z) \biggr)(t),
\end{multline*}
where
\begin{equation*}
\biggl ( \mathbf{E}_{\rho ,\mu ,\omega ;a +}^{\gamma   }\phi \biggr )\left ( x \right )= \int_{a}^{x}\left ( x-t \right )^{\mu -1}E_{\rho ,\mu }^{\gamma }\left ( \omega \left ( x-t \right )^{\rho } \right )\phi \left ( t \right )dt
\end{equation*}
is the Prabhakar integral (see \cite{Prabhakar1971} for details).
Politi  {\em et al.}\ \cite{Politi2011}, instead, evaluate the joint probability given in (\ref{FPT}) by introducing the random variable $ Y_{i} $ which denotes the residual lifetime at $ s $ (that is the time to the next epoch) conditional on $ N_{\,\lambda _{1}+\lambda _{2}}^{\,\nu}\left ( s \right )= i $, i.e. $ Y_{i}\overset{def}{=}\left [ \tau_{i}-s\mid N_{\,\lambda _{1}+\lambda _{2}}^{\,\nu}\left ( s \right )= i \right ] $
whose cumulative distribution function is denoted by $ F_{Y_{i}}(y) $.  Therefore,
\begin{align*}
\mathbb{P}\left \{ N_{\,\lambda _{1}+\lambda _{2}}^{\,\nu}\left ( s\right )=i,N_{\,\lambda _{1}+\lambda _{2}}^{\,\nu}\left ( t \right )= j \right\}=&\mathbb{P}\left \{ N_{\,\lambda _{1}+\lambda _{2}}^{\,\nu}\left ( t\right )-N_{\,\lambda _{1}+\lambda _{2}}^{\,\nu}\left ( s \right )= j-i\mid N_{\,\lambda _{1}+\lambda _{2}}^{\,\nu}\left ( s\right )=i\right\}\notag\\&\times\mathbb{P}\left \{N_{\,\lambda _{1}+\lambda _{2}}^{\,\nu}\left ( s\right )=i\right\},\notag
\end{align*}
where
\begin{multline*}\mathbb{P}\left \{ N_{\,\lambda _{1}+\lambda _{2}}^{\,\nu}\left ( t\right )-N_{\,\lambda _{1}+\lambda _{2}}^{\,\nu}\left ( s \right )= j-i\mid N_{\,\lambda _{1}+\lambda _{2}}^{\,\nu}\left ( s\right )=i\right\}\\=
\begin{cases}
\int_{0}^{t-s}\mathbb{P}\left \{ N_{\,\lambda _{1}+\lambda _{2}}^{\nu } \left ( t-s-y \right )= j-i-1\right \}dF_{Y_{i}}(y),&\text{if $ j-i\geq 1 $},\\1-F_{Y_{i}}(t-s),&\text{if $ j-i=0 $}.
\end{cases}
\end{multline*}
It is meaningful to stress that when $k=2$ the passage of $ M^{\nu}(t) $ to a level $ n $ is not sure. 
In fact, the process can cross state $ n $ without visiting it due to the effect of a jump having size 2.
%%%%%%%%%%%%%%%%%%%%%%
\section{Convergence results}\label{Section5}
%%%%%%%%%%%%%%%%%%%%%%
For the processes $ N_{\,\lambda}^{\,\nu}(t) $ and $ M^{\nu}(t) $, defined respectively in (\ref{PoissonFractional}) and in (\ref{FracPoisson2}), we now focus on a property related to their asymptotic behavior as the relevant parameters grow larger. 
\newtheorem{Cutoff}{Proposition}[section]
\begin{Cutoff}\label{Cutoff}
Let $ \nu \in (0,1]$. Then for a fixed $t>0$ we have 
\begin{equation*}
\frac{N_{\, \lambda}^{\,\nu}(t)}{\mathbb{E}\left [ N_{\, \lambda}^{\,\nu}(t) \right ]}\xrightarrow[\lambda \rightarrow +\infty]{\mathrm{Prob}}1.
\end{equation*}
\end{Cutoff}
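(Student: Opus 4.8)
The plan is to prove convergence in probability to the constant $1$ through a second-moment (Chebyshev) argument, exploiting the fact that the normalised variable $R_\lambda:=N_{\,\lambda}^{\,\nu}(t)/\mathbb{E}[N_{\,\lambda}^{\,\nu}(t)]$ has mean exactly $1$ for every $\lambda$. Thus for each fixed $\varepsilon>0$ I would write
\[
\mathbb{P}\big\{ | R_\lambda - 1| > \varepsilon \big\}
\le \frac{\mathrm{Var}[R_\lambda]}{\varepsilon^{2}}
= \frac{1}{\varepsilon^{2}}\,
\frac{\mathrm{Var}\big[N_{\,\lambda}^{\,\nu}(t)\big]}{\big(\mathbb{E}\big[N_{\,\lambda}^{\,\nu}(t)\big]\big)^{2}},
\]
so that the whole statement reduces to showing that the squared coefficient of variation on the right-hand side tends to $0$ as $\lambda\to+\infty$ with $t>0$ fixed.

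The next step is purely computational: substitute the explicit mean and variance recorded in (\ref{MeanVarPoissonFrac}) and simplify. A direct calculation gives
\[
\frac{\mathrm{Var}\big[N_{\,\lambda}^{\,\nu}(t)\big]}{\big(\mathbb{E}\big[N_{\,\lambda}^{\,\nu}(t)\big]\big)^{2}}
= \frac{2\,\Gamma(\nu+1)^{2}}{\Gamma(2\nu+1)} - 1 + \frac{\Gamma(\nu+1)}{\lambda t^{\nu}},
\]
in which the last summand is the only one carrying a dependence on $\lambda$ and clearly vanishes as $\lambda\to+\infty$. Hence Chebyshev's bound closes the argument precisely when the $\lambda$-free part $c_\nu:=\tfrac{2\Gamma(\nu+1)^{2}}{\Gamma(2\nu+1)}-1$ equals zero.

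The hard part will be this constant $c_\nu$, which is exactly where I expect the difficulty to lie. When $\nu=1$ one has $c_1=\tfrac{2\cdot 1}{2}-1=0$, the bound collapses, and the claim is the ordinary law of large numbers for the Poisson process; the Chebyshev route then works cleanly. For $\nu\in(0,1)$, however, $c_\nu>0$ — indeed $c_\nu$ is exactly the squared coefficient of variation of the inverse $\nu$-stable subordinator ${\tt E}^{\,\nu}(t)$ appearing in Remark~\ref{rm:new} — so the second-moment bound does \emph{not} tend to $0$ and the method, by itself, is inconclusive. To understand what actually happens I would then invoke the subordination $N_{\,\lambda}^{\,\nu}(t)\overset{d}{=}N_{\,\lambda}^{\,1}({\tt E}^{\,\nu}(t))$: conditionally on ${\tt E}^{\,\nu}(t)=w$ the ratio $N_{\,\lambda}^{\,1}(w)/\lambda$ concentrates at $w$, so that $R_\lambda$ converges in distribution to ${\tt E}^{\,\nu}(t)/\mathbb{E}[{\tt E}^{\,\nu}(t)]$, a genuinely nondegenerate limit for $\nu<1$. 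This strongly suggests that the stated convergence in probability to the constant $1$ should be expected to hold only in the classical case $\nu=1$, and that for $\nu\in(0,1)$ one must either add a further hypothesis or weaken the assertion to convergence in distribution towards the normalised inverse stable subordinator; I would therefore check the intended scope of the proposition carefully at this point.
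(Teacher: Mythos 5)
Your proposal does not end in a proof, but that is not a defect of your reasoning: your analysis is sound, and the obstruction you run into is a genuine one that the paper's own argument does not overcome. The paper proves the claim by showing convergence in mean: it writes $\mathbb{E}\bigl[\,\bigl|R_{\lambda}-1\bigr|\,\bigr]$, with $R_{\lambda}=N_{\,\lambda}^{\,\nu}(t)/\mathbb{E}[N_{\,\lambda}^{\,\nu}(t)]$, as the series $\sum_{j\geq 0}\bigl|j\Gamma(\nu+1)/(\lambda t^{\nu})-1\bigr|\,(\lambda t^{\nu})^{j}E_{\nu,j\nu+1}^{j+1}(-\lambda t^{\nu})$ and then lets $\lambda\to+\infty$ term by term, using the asymptotics $E_{\alpha,\beta}^{\delta}(z)\sim{\mathcal O}(|z|^{-\delta})$. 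The interchange of $\lim_{\lambda}$ with the infinite sum is not justified: the bound $\mathbb{E}[|R_{\lambda}-1|]\leq 2$ dominates the expectation, not the individual summands, and no $\lambda$-free summable majorant exists. Indeed the probability mass of $N_{\,\lambda}^{\,\nu}(t)$ sits at indices $j$ of order $\lambda$, where the factor $\bigl|j\Gamma(\nu+1)/(\lambda t^{\nu})-1\bigr|$ is of order one, so the termwise limit discards all of the mass.

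Your two observations, taken together, show that the limit is in fact \emph{not} zero when $\nu\in(0,1)$. First, your Chebyshev computation from (\ref{MeanVarPoissonFrac}) is correct: $\mathrm{Var}[R_{\lambda}]\to c_{\nu}=2\Gamma(\nu+1)^{2}/\Gamma(2\nu+1)-1$, and since $\nu\mapsto\Gamma(2\nu+1)/\Gamma(\nu+1)^{2}$ is strictly increasing and equals $2$ at $\nu=1$, one has $c_{\nu}>0$ for all $\nu\in(0,1)$ and $c_{1}=0$. Second, the subordination of Remarks \ref{rm:new} and \ref{rm:FracPoisson}, $N_{\,\lambda}^{\,\nu}(t)\overset{d}{=}N_{\,\lambda}^{\,1}({\tt E}^{\,\nu}(t))$ with ${\tt E}^{\,\nu}(t)$ independent of the Poisson process, gives (e.g.\ on characteristic functions, by dominated convergence in the mixing integral) that $N_{\,\lambda}^{\,\nu}(t)/\lambda$ converges in distribution to ${\tt E}^{\,\nu}(t)$; hence $R_{\lambda}$ converges in distribution to $\Gamma(\nu+1)\,{\tt E}^{\,\nu}(t)/t^{\nu}$, a non-degenerate Mittag-Leffler random variable with unit mean and variance exactly $c_{\nu}$. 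A family cannot converge in probability to the constant $1$ while converging in distribution to a non-degenerate limit, so the proposition as stated can hold only for $\nu=1$, where it reduces to the weak law for the Poisson process and your Chebyshev bound closes the argument. Your proposed repair --- replacing the conclusion for $\nu\in(0,1)$ by convergence in distribution to the normalised inverse stable subordinator --- is the right one; note that the same termwise-limit issue propagates to Propositions \ref{CutoffGeneral} and \ref{CutoffPoisson2}, whose proofs are declared to be analogous.
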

\begin{proof}[$ \mathbf{Proof} $]
We study the convergence in mean of the random variable $ \frac{N_{\, \lambda}^{\,\nu}(t)}{\mathbb{E}\left [ N_{\, \lambda}^{\,\nu}(t) \right ]} $ to 1. Due to the triangular inequality we have 
\begin{align*}
\mathbb{E}\left [ \left |\frac{N_{\, \lambda}^{\,\nu}(t)}{\mathbb{E}\left [ N_{\, \lambda}^{\,\nu}(t) \right ]} -1 \right | \right ]\leq 2.
\end{align*}
Therefore, we can apply the dominated convergence theorem and calculate the following limit:
\begin{align}\label{limit}
\lim_{\lambda \rightarrow +\infty}\mathbb{E}\left [ \left |\frac{N_{\, \lambda}^{\,\nu}(t)}{\mathbb{E}\left [ N_{\, \lambda}^{\,\nu}(t) \right ]} -1 \right | \right ]=&\lim_{\lambda \rightarrow +\infty}\sum_{j=0}^{+\infty}\left | \frac{j}{\frac{\lambda t^{\nu}}{\Gamma \left ( \nu+1 \right )}}-1 \right | % \notag\\&\times
\left (\lambda t^{\nu }  \right )^{j}E_{\nu,j\nu +1 }^{j+1 }(-\lambda t^{\nu }).
\end{align} 
Taking account of the behavior of the generalized Mittag-Leffler function for large $ z $ (see \cite{Saxena2004} for details), i.e.:
\begin{equation*}
E_{\alpha,\beta }^{\delta }(z)\sim {\mathcal O} \left ( \left | z \right |^{-\delta } \right ),\qquad \left | z \right |>1,
\end{equation*}
we can conclude that limit (\ref{limit}) equals $ 0 $. This fact proves the result since convergence in mean implies convergence in probability.\qedhere
\end{proof}
The previous result can be extended to a more general setting.  Recalling 
the expression (\ref{moments}) for the moments of $N_{\, \lambda }^{\, \nu }\left ( t \right )$,
the proof of the next proposition is similar to that of Proposition \ref{Cutoff} and thus is omitted. 
\newtheorem{CutoffGeneral}[Cutoff]{Proposition}
\begin{CutoffGeneral}\label{CutoffGeneral}
Let $ \nu \in (0,1]$ and $ r\in\mathbb{N} $. Then, for a fixed $t>0$, 
\begin{equation*}
\frac{\left [ N_{\, \lambda}^{\,\nu}(t) \right ]^{r}}{\mathbb{E}\left \{ \left [ N_{\, \lambda}^{\,\nu}(t) \right ]^{r} \right \}}\xrightarrow[\lambda \rightarrow +\infty]{\mathrm{Prob}}1.
\end{equation*}
\end{CutoffGeneral}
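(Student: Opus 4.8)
The plan is to prove the stronger statement of convergence in $L^{1}$, which then yields convergence in probability. Writing $Y_{\lambda}:=\left[N_{\,\lambda}^{\,\nu}(t)\right]^{r}/\,\mathbb{E}\{[N_{\,\lambda}^{\,\nu}(t)]^{r}\}$, I would first record, exactly as in the proof of Proposition \ref{Cutoff}, that $Y_{\lambda}\geq 0$ and $\mathbb{E}[Y_{\lambda}]=1$, so that $|Y_{\lambda}-1|\leq Y_{\lambda}+1$ and hence $\mathbb{E}[|Y_{\lambda}-1|]\leq 2$. This constant majorant is what will license the passage to the limit under the expectation, via dominated convergence, just as in the case $r=1$.

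Next I would expand the mean, in the same manner as (\ref{limit}), by means of the probability mass function (\ref{PFpmf}):
\begin{equation*}
\mathbb{E}\left[\,|Y_{\lambda}-1|\,\right]
=\sum_{j=0}^{+\infty}\left|\frac{j^{\,r}}{\mathbb{E}\{[N_{\,\lambda}^{\,\nu}(t)]^{r}\}}-1\right|
\left(\lambda t^{\nu}\right)^{j}E_{\nu,j\nu+1}^{j+1}(-\lambda t^{\nu}),
\end{equation*}
and control the normalising denominator through the moment formula (\ref{moments}): since $\mathbb{E}\{[N_{\,\lambda}^{\,\nu}(t)]^{r}\}=\sum_{l=0}^{r}S_{\nu}(r,l)(\lambda t^{\nu})^{l}$ is a polynomial in $\lambda t^{\nu}$ of exact degree $r$ with positive leading coefficient $S_{\nu}(r,r)$, it diverges like $S_{\nu}(r,r)(\lambda t^{\nu})^{r}$ as $\lambda\to+\infty$. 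Consequently, for every fixed $j$ the bracket tends to $1$, while the weight $(\lambda t^{\nu})^{j}E_{\nu,j\nu+1}^{j+1}(-\lambda t^{\nu})$ tends to $0$ by the same estimate used in Proposition \ref{Cutoff}, following from the large-argument asymptotics $E_{\alpha,\beta}^{\delta}(z)\sim\mathcal{O}(|z|^{-\delta})$, which here produces a weight of order $(\lambda t^{\nu})^{j}(\lambda t^{\nu})^{-(j+1)}=\mathcal{O}((\lambda t^{\nu})^{-1})$. Each summand therefore vanishes in the limit, and---granting the interchange of limit and summation---the whole series tends to $0$, giving $Y_{\lambda}\to 1$ in $L^{1}$ and hence in probability.

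The main obstacle, and the only step genuinely beyond a routine calculation, is the justification of the termwise passage to the limit inside the infinite sum. The constant majorant $\leq 2$ controls the total mass but not the circumstance that, as $\lambda\to+\infty$, the bulk of the distribution of $N_{\,\lambda}^{\,\nu}(t)$ escapes to ever larger indices $j$ comparable to $\mathbb{E}[N_{\,\lambda}^{\,\nu}(t)]$; one must therefore verify that the contribution of this intermediate range---where the bracket $|j^{r}/\mathbb{E}\{[N_{\,\lambda}^{\,\nu}(t)]^{r}\}-1|$ need not be small---is itself negligible, rather than merely invoking the vanishing of each fixed-$j$ term. I would attempt this by splitting the series at a threshold depending on $\lambda$ and estimating the low-index tail (carrying vanishing probability) separately from the bulk, but this is precisely the delicate point on which the argument stands or falls, and it is here that the greatest care, and possibly a sharper tail estimate for the generalized Mittag-Leffler weights, will be required.
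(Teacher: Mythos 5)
Your argument is the paper's own argument: the authors omit the proof of this proposition, declaring it ``similar to that of Proposition \ref{Cutoff}'', and what you write is exactly that proof with $j$ replaced by $j^{\,r}$ and the denominator controlled through (\ref{moments}). You are also right to isolate the interchange of $\lim_{\lambda\to+\infty}$ with $\sum_{j\geq 0}$ as the step that the constant bound $\mathbb{E}[\,|Y_{\lambda}-1|\,]\leq 2$ does not license: the summation index is $j$ while the limit is in $\lambda$, so a uniform bound on the total expectation dominates nothing termwise, and the weights $(\lambda t^{\nu})^{j}E^{\,j+1}_{\nu,j\nu+1}(-\lambda t^{\nu})$ sum to $1$ for every $\lambda$ even though each one is $\mathcal{O}((\lambda t^{\nu})^{-1})$. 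This is precisely the gap already present in the paper's proof of Proposition \ref{Cutoff}, which your proposal inherits and honestly leaves open.

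The reason you should not expect the splitting you sketch to close the gap is that the bulk contribution you worry about is genuinely not negligible when $\nu\in(0,1)$. From (\ref{MgfPF}), for every $u>0$,
\begin{equation*}
\mathbb{E}\left[e^{-u\,N_{\,\lambda}^{\,\nu}(t)/\lambda}\right]
=E_{\nu,1}\!\left(\lambda\left(e^{-u/\lambda}-1\right)t^{\nu}\right)
\longrightarrow E_{\nu,1}\!\left(-u\,t^{\nu}\right),\qquad \lambda\to+\infty,
\end{equation*}
so $N_{\,\lambda}^{\,\nu}(t)/\lambda$ converges in distribution to the random variable with Laplace transform $E_{\nu,1}(-ut^{\nu})$, i.e.\ to the inverse $\nu$-stable subordinator ${\tt E}^{\,\nu}(t)$ of Remark \ref{rm:new}, which is nondegenerate unless $\nu=1$. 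Equivalently, (\ref{MeanVarPoissonFrac}) gives $\mathrm{Var}[N_{\,\lambda}^{\,\nu}(t)]/(\mathbb{E}[N_{\,\lambda}^{\,\nu}(t)])^{2}\to 2\Gamma(\nu+1)^{2}/\Gamma(2\nu+1)-1$, which is strictly positive for $\nu\in(0,1)$ (it equals $\pi/2-1$ at $\nu=1/2$). Hence $N_{\,\lambda}^{\,\nu}(t)/\mathbb{E}[N_{\,\lambda}^{\,\nu}(t)]$, and likewise its $r$th power over its mean, converges in distribution to a nonconstant limit, and convergence in probability to $1$ can hold only for $\nu=1$. Your instinct that the argument ``stands or falls'' on the interchange is correct: it falls, no sharper tail estimate for the generalized Mittag--Leffler weights can repair it, and the honest conclusion of the computation in (\ref{limit}) is only that each fixed-$j$ term vanishes, not that the series does.
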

In order to prove an analogous result for $ M^{\nu}(t) $, in the following lemma we give a formal expression for the moments of such a process.
\newtheorem{Lem}{Lemma}[section]
\begin{Lem}\label{lemma}
The $ m^{th} $ order moment of the process $ M^{\nu}(t)$, $t\geq 0$, reads
\begin{align}\label{Moments}
\mathbb{E}\left \{ \left [ M^{\,\nu}(t) \right ]^{m} \right \}
&=\sum_{r=0}^{m}\frac{t^{r\nu}}{\Gamma \left ( r\nu+1 \right )}
\sum_{i_{1}+\ldots +i_{k}= r}\binom{r}{i_{1},\ldots, i_{k}}\lambda _{1}^{i_{1}}\ldots \lambda _{k}^{i_{k}}
\nonumber \\
 &\;\;\times\sum_{n_{1}+\ldots +n_{k}= m}\binom{m}{n_{1},\ldots, n_{k}}
 \left [ \frac{d^{\, n_{1}} }{ds^{\, n_{1}}}\big ( e^{s}-1 \big )^{i_{1}}\ldots  \frac{d^{\, n_{k}} }{ds^{\, n_{k}}}
 \big ( e^{ks}-1 \big )^{i_{k}}\right ]\bigg |_{s=0}.
\end{align}
\end{Lem}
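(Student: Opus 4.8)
The plan is to read off the moments directly from the moment generating function established in Proposition \ref{fgmfraz}, via the standard identity $\mathbb{E}\{[M^{\nu}(t)]^{m}\}=\frac{d^{m}}{ds^{m}}\mathbb{E}[e^{sM^{\nu}(t)}]\big|_{s=0}$. Setting $u(s):=\sum_{j=1}^{k}\lambda_{j}(e^{js}-1)\,t^{\nu}$, formula (\ref{fgmomM}) reads $\mathbb{E}[e^{sM^{\nu}(t)}]=E_{\nu,1}(u(s))$, and the defining series of the Mittag-Leffler function gives $E_{\nu,1}(u(s))=\sum_{r=0}^{\infty}u(s)^{r}/\Gamma(\nu r+1)$. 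Since $E_{\nu,1}$ is entire and $u$ is real-analytic with $u(0)=0$, this series may be differentiated term by term in a neighbourhood of $s=0$.

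The decisive observation is the truncation of the sum over $r$. Because $u(0)=0$, the function $u(s)^{r}$ vanishes to order at least $r$ at the origin, so $\frac{d^{m}}{ds^{m}}u(s)^{r}\big|_{s=0}=0$ whenever $r>m$; this reduces the infinite series to the finite sum $\sum_{r=0}^{m}$ appearing in (\ref{Moments}), leaving only the evaluation of $\frac{d^{m}}{ds^{m}}u(s)^{r}\big|_{s=0}$ for each $r\leq m$.

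To carry this out I would expand $u(s)^{r}=t^{r\nu}\big(\sum_{j=1}^{k}\lambda_{j}(e^{js}-1)\big)^{r}$ by the multinomial theorem, obtaining $t^{r\nu}\sum_{i_{1}+\cdots+i_{k}=r}\binom{r}{i_{1},\ldots,i_{k}}\lambda_{1}^{i_{1}}\cdots\lambda_{k}^{i_{k}}\prod_{j=1}^{k}(e^{js}-1)^{i_{j}}$, and then apply the general Leibniz rule for the $m$th derivative of a product of $k$ factors, namely $\frac{d^{m}}{ds^{m}}\prod_{j=1}^{k}(e^{js}-1)^{i_{j}}=\sum_{n_{1}+\cdots+n_{k}=m}\binom{m}{n_{1},\ldots,n_{k}}\prod_{j=1}^{k}\frac{d^{n_{j}}}{ds^{n_{j}}}(e^{js}-1)^{i_{j}}$. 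Evaluating at $s=0$ reproduces precisely the bracketed factor of the statement, and inserting the weights $t^{r\nu}/\Gamma(\nu r+1)$ assembles the claimed identity.

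The argument is essentially bookkeeping once the two analytic facts are in place. The only step that genuinely needs care is the justification of term-by-term differentiation of the infinite Mittag-Leffler series together with the order-of-vanishing estimate that produces the truncation; both follow at once from the analyticity of $s\mapsto E_{\nu,1}(u(s))$ near the origin, so I do not anticipate any serious obstacle beyond the multi-index bookkeeping.
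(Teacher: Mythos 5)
Your proof is correct and starts from the same point as the paper's: both compute the $m$th derivative at $s=0$ of the moment generating function established in Proposition \ref{fgmfraz}. The difference lies in how the higher-order chain rule is handled. The paper invokes Hoppe's formula for the $m$th derivative of the composite $E_{\nu,1}(u(s))$, which yields the truncated sum over $r\le m$ together with the auxiliary quantities $A_{m,r}$, and then disposes of the rest as ``rather cumbersome algebra''. You instead expand $E_{\nu,1}$ as its defining power series, justify term-by-term differentiation by analyticity (the series of holomorphic functions converges locally uniformly, so Weierstrass's theorem applies), and obtain the truncation to $r\le m$ from the observation that $u(s)^{r}$ vanishes to order at least $r$ at the origin. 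The subsequent multinomial expansion of $u(s)^{r}$ and the generalized Leibniz rule reproduce exactly the bracketed factor and the multi-index sums in (\ref{Moments}); note that this is precisely the ``cumbersome algebra'' the paper leaves implicit, so your write-up is in that respect more complete. The two routes in fact collapse to the same intermediate expression, since evaluating the paper's $A_{m,r}$ at $s=0$ kills every term except $h=r$ and $E_{\nu,1}^{(r)}(0)/r!=1/\Gamma(\nu r+1)$; your version simply reaches that expression without needing Hoppe's formula, which is a modest but genuine streamlining. No gaps.
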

\begin{proof}[$ \mathbf{Proof} $]
By applying Hoppe's formula in order to evaluate the derivatives of the moment generating function of the process $ M^{\,\nu}(t) $, cf.\ (\ref{fgmomM}), we have
\begin{equation*}
\mathbb{E}\left \{ \left [ M^{\,\nu}(t) \right ]^{m} \right \}=\sum_{r=0}^{m}\frac{\left ( E_{\nu,1} (z)\right )^{\left ( r \right )}\big |_{z=\sum_{j=1}^{k}\lambda_{j}\left ( e^{js}-1 \right )t^{\nu }}}{r!}
\,A_{m,r}\Bigg ( \sum_{j=1}^{k}\lambda_{j}\left ( e^{js}-1 \right )t^{\nu } \Bigg )\Bigg |_{s=0},
\end{equation*}
where
\begin{align*}
A_{m,r}\Bigg ( \sum_{j=1}^{k}\lambda_{j}\left ( e^{js}-1 \right )t^{\nu } \Bigg )
=&\sum_{h=0}^{r}\binom{r}{h}\Bigg ( -\sum_{j=1}^{k}\lambda_{j}\left ( e^{js}-1 \right )t^{\nu } \Bigg )^{\!r-h}
\\
&\times \frac{d^{m}}{ds^{m}}\Bigg ( \sum_{j=1}^{k}\lambda_{j}\left ( e^{js}-1 \right )t^{\nu } \Bigg )^{\!h}.
\end{align*}
Finally, after using rather cumbersome algebra, we obtain (\ref{Moments}).
\end{proof}
It is now immediate to verify the following result for  $ M^{\nu}(t) $.
\newtheorem{CutoffPoisson2}[Cutoff]{Proposition}
\begin{CutoffPoisson2}\label{CutoffPoisson2}
Let $ \nu \in (0,1]$ and $ m\in\mathbb{N} $. Then, for $ i\in\left \{ 1,2,\ldots, k \right \} $ and 
for a fixed $t>0$, we have
\begin{equation*}
\frac{\left [ M^{\,\nu}(t) \right ]^{m}}{\mathbb{E}\left \{ \left [ M^{\,\nu}(t) \right ]^{m} \right \}}\xrightarrow[\lambda_{i} \rightarrow +\infty]{\mathrm{Prob}}1.
\end{equation*}
\end{CutoffPoisson2}
\begin{proof}[$ \mathbf{Proof} $]
By virtue of (\ref{Moments}), convergence in probability can be obtained by proving convergence in mean, as in Proposition \ref{Cutoff}. \qedhere
\end{proof}
The results presented in this section deserve interest in some physical contexts. We recall that a family of random variables $ U^{(\lambda)} $ exhibits \textit{cut-off behavior} at mean times if (see, for instance, Definition 1 of \cite{Barrera2009})
\begin{equation*}
\frac{U^{\left ( \lambda  \right )}}{\mathbb{E}\left [ U^{\left ( \lambda  \right )} \right ]}\xrightarrow[\lambda \rightarrow +\infty]{\mathrm{Prob}}1.
\end{equation*}
Hence, Propositions \ref{Cutoff}, \ref{CutoffGeneral} and \ref{CutoffPoisson2} show that the processes $ \left [N_{\lambda }^{\nu }\left ( t \right )  \right ]^{m} $ and $ \left [M^{\nu}\left ( t \right )  \right ]^{m} $, $ m\in\mathbb{N} $, exhibit cut-off behavior at mean times with respect to the relevant parameters or, roughly speaking, that they somehow converge very abruptly to equilibrium.
\\\\
We finally remark that in this context the sufficient condition given in Proposition 1 of \cite{Barrera2009} is not useful to prove Proposition \ref{Cutoff}, since such condition holds only when $ \nu=1 $.

\section*{Acknowledgements}
The authors would like to thank an anonymous referee for some useful comments.

\bibliographystyle{alea3}
%\begin{thebibliography}{99}

\bibliography{ }

\end{document}